\def\figurename{Figure} 
\renewcommand{\fnum@figure}[1]{\figurename~\thefigure.}
\def\tablename{Table} 
\renewcommand{\fnum@table}[1]{\tablename~\thetable.}
\newtheorem{theorem}{Theorem}[section]
\newtheorem{proposition}[theorem]{proposition}
\theoremstyle{example}
\newtheorem{example}[theorem]{example}
\theoremstyle{definition}
\newtheorem{definition}[theorem]{definition}
\theoremstyle{remark}
\newtheorem{remark}[theorem]{Remark}
\numberwithin{equation}{section}
\begin{document}

\title{\bfseries\scshape{Some structures of Hom-Poisson color algebras}}

\author{\bfseries\scshape Ibrahima BAKAYOKO\thanks{e-mail address: ibrahimabakayoko27@gmail.com}\\
D\'epartement de Math\'ematiques,
Universit\'e de N'Z\'er\'ekor\'e,\\
BP 50 N'Z\'er\'ekor\'e, Guin\'ee.
  \\\bfseries\scshape Silvain Attan \thanks{e-mail address: syltane2010@yahoo.fr}\\
 D\'epartement de Math\'ematiques,
Universit\'e d'Abomey Calavi,\\ 01 BP 4521, Cotonou 01, B\'enin.
}
 
\date{}
\maketitle 


\noindent\hrulefill

\noindent {\bf Abstract.} 
In many previous papers, the authors used an endomorphism of algebra to twist the original algebraic structures in order to produce the corresponding
Hom-algebraic structures. In this works, we use these either a bijective linear map, either an element of centroid either an averaging operator 
either Nijenhuis operator, either a multiplier to produce Hom-Poisson color algebras from given one.

\noindent \hrulefill

\vspace{.3in}

 \noindent {\bf AMS Subject Classification: } 17A30, 17B63, 17B70.

\vspace{.08in} \noindent \textbf{Keywords}: 
Hom-Poisson color algebras, bijective even linear map, element of centroid, averaging operator, Nijenhuis operator,  Rota-Baxter operator
 and multiplier.
\vspace{.3in}
\vspace{.2in}
\section{Introduction and first definitions}
 Hom-associative color algebras \cite{LY} has been introduced by L. Yuan  as a generalization of both Hom-associative algebra and associative color algebras.
And Hom-Lie color algebras were introduced by the same auther as natural a generalization of the Hom-Lie algebras as well as a special case
of the quasi-hom-Lie algebras. The author proved that the commutator of any Hom-associative color algebra gives rise to Hom-Lie color algebra and
 presents a way to obtain Hom-Lie color algebras from the classical Lie color algebras along with algebra  endomorphisms. Also, He introduced a 
multiplier $\sigma$ on an abelian group  and provide constructions of new  Hom-Lie color algebras from old ones by the $\sigma$-twists. 

However, Hom-Poisson color algebras are introduced in \cite{BI1} as the colored version verion of Hom-Poisson algebras introduced in \cite{IT}. 
The authers in \cite{IT} give some constructions of Hom-Poisson color algebras from  Hom-associative color algebras which twisting map is an
averaging operator or from a given Hom-Poisson color algebra together with an averaging operator or from a Hom-post-Poisson color algebra. 
In particular, they show that any Hom-pre-Poisson color algebra leads to a Hom-Poisson color algebra. 
The description of Hom-Poisson color algebras  is given in \cite{SA} by using only one operation of its two binary operations via the 
polarisation-depolarisation process.

The goal of this paper is to give a contuation of constructions of Hom-Poisson color algebras \cite{IT}.
While many authers working on Hom-algebras use a morphism of Hom-algebras to builds another one, we ask our self if there are other kind of twist 
which are not morphism such that we can get Hom-algebraic structures from others one. 
To give a positive answer to the questions above, we organize this paper as follows. 
In Section 1, we recall some basic definitions about Rota-Baxter Hom-associative color algebras and Rota-Baxter Hom-Lie color algebras
 as well as averaging operator, Nijenhuis operator and centroid.
In Section 2, we give the main results i.e. we give new products for Hom-Poisson color algebras from another one by twisting the original 
multiplications by a bijective linear map, an element of centroid, an averaging operator, a Rota-Baxter operator, a Nijenhuis operator or a
 multiplier.

\pagestyle{fancy} \fancyhead{} \fancyhead[EC]{ } 
\fancyhead[EL,OR]{\thepage} \fancyhead[OC]{Ibrahima bakayoko and Silvain Attan} \fancyfoot{}
\renewcommand\headrulewidth{0.5pt}

 Throughout this paper, all graded vector spaces are assumed to be over a field $\mathbb{K}$ of characteristic different from 2.

\begin{definition}
 Let $G$ be an abelian group. A map $\varepsilon :G\times G\rightarrow {\bf \mathbb{K}^*}$ is called a skew-symmetric bicharacter on $G$ if the following
identities hold, 
\begin{enumerate} 
 \item [(i)] $\varepsilon(a, b)\varepsilon(b, a)=1$,
\item [(ii)] $\varepsilon(a, b+c)=\varepsilon(a, b)\varepsilon(a, c)$,
\item [(iii)]$\varepsilon(a+b, c)=\varepsilon(a, c)\varepsilon(b, c)$,
\end{enumerate}
$a, b, c\in G$.
\end{definition}
\begin{remark}
Observe that $\varepsilon(a, 0)=\varepsilon(0, a)=1, \varepsilon(a,a)=\pm 1 \;\mbox{for all}\; a\in G, \;\mbox{where}\; 0 \;\mbox{is the identity of}\; G.$
\end{remark}
If x and y are two homogeneous elements of degree $a$ and $b$ respectively and $\varepsilon$ is a skew-symmetric bicharacter, 
then we shorten the notation by writing $\varepsilon(x, y)$ instead of $\varepsilon(a, b)$.

\begin{definition}
 By a color Hom-algebra we mean a quadruple $(A, \mu, \varepsilon, \alpha)$ in which 
\begin{enumerate}
 \item [a)] $A$ is a $G$-graded vector space i.e. $A=\bigoplus_{a\in G}A_a$,
\item [b)] $\mu : A \times A \rightarrow A$ is an even bilinear map i.e. $\mu(A_a, A_b)\subseteq A_{a+b}$, for all $a, b\in G$,
\item [c)] $\alpha : A\rightarrow A$ is an even linear map i.e. $\alpha(A_a)\subseteq A_{a}$,
\item [d)] $\varepsilon : G\times G\rightarrow{\bf K}^*$ is a bicharacter.
\end{enumerate}
\end{definition}

\begin{example}
 \begin{enumerate}
\item [1)] 
$ G=\mathbb{Z}_2^n=\{(\alpha_1, \dots, \alpha_n)| \alpha_i\in\mathbb{Z}_2 \}, \quad
\varepsilon((\alpha_1, \dots, \alpha_n), (\beta_1, \dots, \beta_n)):= (-1)^{\alpha_1\beta_1+\dots+\alpha_n\beta_n},$
\item [2)] $G=\mathbb{Z}\times\mathbb{Z} ,\quad \varepsilon((i_1, i_2), (j_1, j_2))=(-1)^{(i_1+i_2)(j_1+j_2)}$,
\item [3)] $G=\{-1, +1\} , \quad\varepsilon(i, j)=(-1)^{(i-1)(j-1)/{4}}$.
\end{enumerate}
\end{example}

\begin{example}
 Let $\sigma : G\times G\rightarrow\mathbb{K}^*$ be any mapping such that 
\begin{eqnarray}
 \sigma(x, y+z)\sigma(y, z)=\sigma(x, y)\sigma(x+y, z), \forall x, y, z\in G.
\end{eqnarray}
Then, $\delta(x, y)=\sigma(x, y)\sigma(y, x)^{-1}$ is a bicharacter on $G$. In this case, $\sigma$ is called a {\it multiplier} on $G$, and 
$\delta$ the bicharacter associated with $\sigma$.

For instance, let us define the mapping $\sigma : G\times G\rightarrow\mathbb{R}$ by
\begin{eqnarray}
 \sigma((i_1, i_2), (j_1, j_2))=(-1)^{i_1j_2}, \forall i_k, j_k\in\mathbb{Z}_2, k=1, 2.\nonumber
\end{eqnarray}
It is easy to verify that $\sigma$ is a multiplier on $G$ and 
\begin{eqnarray}
 \delta((i_1, i_2), (j_1, j_2))=(-1)^{i_1j_2-i_2j_1}, \forall i_k, j_k\in \mathbb{Z}_2, i=1, 2. \nonumber
\end{eqnarray}
is a bicharacter on $G$.
\end{example}

 \begin{definition}
A Hom-associative color algebra is a color Hom-algebra $(A, \mu, \varepsilon, \alpha)$  such that
\begin{eqnarray}
 as_\mu(x, y, z)=\mu(\alpha(x), \mu(y, z))-\mu(\mu(x, y), \alpha(z))=0, \label{aca}
\end{eqnarray}
for any $x, y, z \in \mathcal{H}(A)$.\\
If in addition $\mu=\varepsilon(\cdot, \cdot)\mu^{op}$ i.e. $\mu(x, y)=\varepsilon(x, y)\mu(y, x)$, for any $x, y\in\mathcal{H}(A)$, the Hom-associative color algebra 
$(A, \mu, \varepsilon, \alpha)$ is said to be a commutative Hom-associative color algebra.
\end{definition}
\begin{proposition}
 Let $(A, \mu, \varepsilon)$ be an associative color algebra and $\alpha : A\rightarrow A$ an even linear map such that $(A, \mu, \varepsilon, \alpha)$
be a Hom-associative color algebra. Then, for any fixed element $\xi\in A$, the quadruple $(A, \mu_\xi, \varepsilon, \alpha)$ is a Hom-associative
color algebra with
$$\mu_\xi(x, y)=x\xi y,$$
for any $x, y \in \mathcal{H}(A)$.
\end{proposition}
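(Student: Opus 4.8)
The plan is to verify the Hom-associativity identity \eqref{aca} for the twisted product $\mu_\xi$ directly, exploiting the two structures simultaneously available: the genuine associativity of $(A,\mu,\varepsilon)$ and the Hom-associativity of $(A,\mu,\varepsilon,\alpha)$. First I would record that, because $\mu$ is associative, any iterated product of homogeneous elements is unambiguous, so I write $abc$ for $\mu(\mu(a,b),c)=\mu(a,\mu(b,c))$; in particular $\mu_\xi(x,y)=x\xi y$ is well defined. In this flattened notation the Hom-associativity hypothesis becomes
\[
\alpha(x)\,y\,z = x\,y\,\alpha(z),
\]
valid for all homogeneous $x,y,z$, since both $\mu(\alpha(x),\mu(y,z))$ and $\mu(\mu(x,y),\alpha(z))$ collapse to the same threefold product shape.

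Next I would expand each side of the identity to be proved and regroup using associativity of $\mu$. On one side,
\[
\mu_\xi(\alpha(x),\mu_\xi(y,z)) = \alpha(x)\,\xi\,(y\,\xi\,z) = \alpha(x)\,(\xi y \xi)\,z,
\]
while on the other side,
\[
\mu_\xi(\mu_\xi(x,y),\alpha(z)) = (x\,\xi\,y)\,\xi\,\alpha(z) = x\,(\xi y \xi)\,\alpha(z).
\]
It then remains only to match these two flattened products.

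The crucial step is to apply the flattened Hom-associativity identity exactly once, to the homogeneous triple $(x,\ \xi y \xi,\ z)$; note that $\xi y \xi$ is homogeneous because $\mu$ is even, so it is a legitimate argument $b$ of the hypothesis. Treating this middle factor as a single element $b$, the identity gives $\alpha(x)\,b\,z = x\,b\,\alpha(z)$, which is precisely the equality of the two expressions above. Hence $as_{\mu_\xi}(x,y,z)=0$, and since the grading, bilinearity, evenness of $\alpha$ and the bicharacter $\varepsilon$ are inherited unchanged, $(A,\mu_\xi,\varepsilon,\alpha)$ is a Hom-associative color algebra.

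I expect no genuine obstacle here: the only point requiring attention is the parenthesis bookkeeping, namely recognizing that associativity of $\mu$ lets me compress $\xi(y\xi z)$ and $(x\xi y)\xi$ into the common form $(\,\cdot\,)(\xi y\xi)(\,\cdot\,)$, so that a single invocation of the Hom-associativity of $\mu$ closes the argument. I would also emphasize that the bicharacter $\varepsilon$ never enters the computation, because the (Hom-)associativity axiom for color algebras is sign-free; thus $\varepsilon$ and the twisting map $\alpha$ are simply carried along unchanged by the $\xi$-twist.
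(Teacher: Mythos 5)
Your proof is correct and follows essentially the same route as the paper's: flatten all products using associativity of $\mu$, regroup both sides around the middle factor $\xi y\xi$, and apply the Hom-associativity of $(A,\mu,\varepsilon,\alpha)$ once to the triple $(x,\,\xi y\xi,\,z)$. The only cosmetic difference is that the paper computes $as_{\mu_\xi}(x,y,z)$ as a single telescoping chain ending in $0$, and your side remark that $\xi y\xi$ is homogeneous is not strictly needed (for general $\xi\in A$ one just invokes bilinearity of the identity), but neither point affects the argument.
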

\begin{proof}
For any $x, y, z \in \mathcal{H}(A)$,
 \begin{eqnarray}
  as_{\mu_\xi}(x, y, z)&=&\mu_\xi(\mu_\xi(x, y), \alpha(z))-\mu_\xi(\alpha(x), \mu_\xi(y, z))\nonumber\\
&=&(x\xi y)\xi\alpha(z)-\alpha(x)\xi(y\xi z)\nonumber\\
&=&x(\xi y\xi)\alpha(z)-\alpha(x)(\xi y\xi) z\nonumber\\
&=&(x\xi y\xi)\alpha(z)-(x\xi y\xi)\alpha(z)\nonumber\\
&=&0.
 \end{eqnarray}
This gives the conclusion.
\end{proof}


\begin{definition}
 A Hom-Lie color  algebra is a color Hom-algebra $(A, [\cdot, \cdot], \varepsilon, \alpha)$  such that
\begin{eqnarray}
 [x, y]=-\varepsilon(x, y)[y, x],\label{ss}\qquad\qquad\qquad\qquad\\
\varepsilon(z, x)[\alpha(x), [y, z]]+\varepsilon(x, y)[\alpha(y), [z, x]]+\varepsilon(y, z)[\alpha(z), [x, y]]=0,\label{chli}
\end{eqnarray}
for any $x, y, z\in\mathcal{H}(A)$.
\end{definition}
\begin{example}
It is clear that Lie color algebras are examples of Hom-Lie color algebras
by setting $\alpha = id$ . If, in addition, $\varepsilon(x, y)=1$ or $\varepsilon(x, y)=(-1)^{|x||y|}$, then the Hom-Lie color
algebra is nothing but a classical Lie algebra or Lie superalgebra. Hom-Lie algebras and
Hom-Lie superalgebras are also obtained when $\varepsilon(x, y)=1$ and $\varepsilon(x, y)=(-1)^{|x||y|}$ respectively.
 See \cite{LY} for other examples.
\end{example}
\begin{example}
 See \cite{LY} for Hom-Lie color $sl(2, \mathbb{K})$, Heisenberg Hom-Lie color algebra and Hom-Lie color algebra of Witt type.
\end{example}

\begin{definition} 
1) A  Rota-Baxter Hom-associative color algebra of weight $\lambda\in{\bf K}$ is a Hom-associative color 
algebra $(A, \cdot, \varepsilon, \alpha)$ together with an even linear map $R : A\rightarrow A$ that satisfies the identities
\begin{eqnarray}
R\circ\alpha&=&\alpha\circ R, \label{rb11}\\
R(x)\cdot R(y) &=& R\Big(R(x)\cdot y + x\cdot R(y) +\lambda x\cdot y\Big),\label{rb12}
\end{eqnarray}
for all $x, y\in\mathcal{H}(L)$.\\
2) A  Rota-Baxter Hom-Lie color algebra of weight $\lambda\in{\bf K}$ is a Hom-Lie color 
algebra $(L, [-, -], \varepsilon, \alpha)$ together with an even linear map $R : L\rightarrow L$ that satisfies the identities
\begin{eqnarray}
R\circ\alpha&=&\alpha\circ R, \label{rb21}\\
{[R(x), R(y)]} &=& R\Big([R(x), y] + [x, R(y)] +\lambda [x, y]\Big),\label{rb22}
\end{eqnarray}
for all $x, y\in\mathcal{H}(L)$.
\end{definition}

\begin{example}
Let $G=\{-1, +1\}$ be an abelian group and $A=A_{(-1)}\oplus A_{(1)}=<e_2>\oplus<e_1>$ a $G$-graded two dimensional vector space. The quintuple 
$(A, \cdot, \varepsilon, \alpha, R)$ is a Rota-Baxter Hom-associative color algebra of weight $\lambda$ with 
\begin{itemize}
 \item the multiplication, $e_1\cdot e_1=-e_1,\quad e_1\cdot e_2=e_2,\quad e_2\cdot e_1=e_2,\quad e_2\cdot e_2=e_1$,
\item the bicharacter, $\varepsilon(i, j)=(-1)^{(i-1)(j-1)/4}$,
\item the even linear map $\alpha : A\rightarrow A$ defined by : $\alpha(e_1)=e_1,\quad \alpha(e_2)=-e_2$,
\item the Rota-Baxter operator $R : A\rightarrow A$ given by : $R(e_1)=-\lambda e_1, R(e_2)=-\lambda e_2$.
\end{itemize}
\end{example}

\begin{definition}
For any integer $k$, we call \\
1)  an $\alpha^k$-averaging operator over a Hom-associative color algebra  $(A, \mu, \varepsilon, \alpha)$, an even linear map $\beta : A\rightarrow A$ such
that $\alpha\circ\beta=\beta\circ\alpha$ and
\begin{eqnarray}
\alpha\circ\beta&=&\beta\circ\alpha, \label{avo11}\\
 \beta(\mu(\beta(x), \alpha^k(y))&=&\mu(\beta(x), \beta(y))=\beta(\mu(\alpha^k(x), \beta(y))), \label{avo12}
\end{eqnarray}
for all $x, y\in\mathcal{H}(A)$.\\
2) an $\alpha^k$-averaging operator over a Hom-Lie color algebra  $(A, {-, -}, \varepsilon, \alpha)$, an even linear map 
$\beta : A\rightarrow A$ such that
\begin{eqnarray}
\alpha\circ\beta&=&\beta\circ\alpha,\label{avo21}\\
{[\beta(x), \beta(y)]}&=& \beta([\beta(x), \alpha^k(y)]),\label{avo22}
\end{eqnarray}
for all $x, y\in\mathcal{H}(A)$.
 \end{definition}
For example, any even $\alpha$-differential operator $d : A\rightarrow A$ (i.e. an $\alpha$-derivation $d$ such that $d^2=0$) over a Hom-associative
 color algebra  is an  $\alpha$-averaging operator.

\begin{definition}
For any  integer $k$, we call \\
1) an element of $\alpha^k$-centroid  of a Hom-associative color algebra $(L, \cdot, \varepsilon, \alpha)$,
 an even linear map $\beta : A\rightarrow A$ such that 
\begin{eqnarray}
\beta\circ\alpha&=&\alpha\circ\beta, \label{cent11}\\
 \beta(x\cdot y)&=&\beta(x)\cdot \alpha^k(y)=\alpha^k(x)\cdot \beta(y), \label{cent12}
\end{eqnarray}
for all $x, y\in\mathcal{H}(A)$.\\
2) an element of $\alpha^k$-centroid  of a Hom-Lie color algebra $(L, [-, -], \varepsilon, \alpha)$,
  an even linear map $\beta : L\rightarrow L$ such that 
\begin{eqnarray}
\beta\circ\alpha&=&\alpha\circ\beta, \label{cent21}\\
 \beta([x, y])&=&[\beta(x), \alpha^k(y)], \label{cent22}
\end{eqnarray}
for all $x, y\in\mathcal{H}(A)$.
\end{definition}
Observe that  $\beta([x, y]=[\alpha^k(x), \beta(y)]$ thanks to the $\varepsilon$-skew-symmetry.

\begin{definition}
 1) A Nijenhuis operator over a Hom-associative color algebra  $(A, \mu, \varepsilon, \alpha)$ is an even linear map $N : A\rightarrow A$ such that
\begin{eqnarray}
\alpha\circ N&=&N\circ\alpha,\label{nij11}\\
 \mu(N(x), N(y))&=&N\Big(\mu(N(x), y)+\mu(x, N(y))-N(\mu(x, y))\Big), \label{nij12}
\end{eqnarray}
for all $x, y\in\mathcal{H}(A)$.\\
2) A Nijenhuis operator over a Hom-Lie color algebra  $(A, \mu, \varepsilon, \alpha)$ is an even linear map $N : A\rightarrow A$ such that 
\begin{eqnarray}
\alpha\circ N&=&N\circ\alpha,\label{nij21}\\
 {[N(x), N(y)]}&=&N\Big([N(x), y]+[x, N(y)]-N([x, y])\Big), \label{nij22}
\end{eqnarray}
for all $x, y\in\mathcal{H}(A)$.
 \end{definition}

\section{Hom-Poisson color algebras}
In this section, we present some constructions of Hom-Poisson color algebras. In the most proof, we only prove the 
compatibility condition and left Hom-associativity and Hom-Jacobi identity.
\begin{definition}
  A Hom-Poisson color  algebra consists of a $G$-graded vector space $A$, a multiplication $\mu : A\times A\rightarrow A$,
 an even bilinear bracket $\{\cdot, \cdot\} : A\times A\rightarrow A$ and an even linear map $\alpha : A\rightarrow A$ such that :
\begin{enumerate}
 \item[1)] $(A, \mu, \varepsilon, \alpha)$ is a Hom-associative color algebra,
\item [2)]$(A, \{\cdot, \cdot\}, \varepsilon, \alpha)$ is a Hom-Lie color  algebra,
\item[3)]the Hom-Leibniz color identity
\begin{eqnarray}
 \{\alpha(x), \mu(y, z)\}=\mu(\{x, y\}, \alpha(z))+\varepsilon(x,y)\mu(\alpha(y), \{x, z\}),\label{cca}\nonumber
\end{eqnarray}
is satisfied for any $x, y, z\in \mathcal{H}(A)$.
\end{enumerate}
A Hom-Poisson color algebra $(A, \mu, \{\cdot, \cdot\}, \varepsilon, \alpha)$ in which $\mu$ is $\varepsilon$-commutative
is said to be a commutative Hom-Poisson color algebra.
 \end{definition}

\begin{example}\label{exp}
Let $A=A_{(0)}\oplus A_{(1)}=<e_1, e_2>\oplus<e_3>$ be  a  three-dimensional  graded vector space and $\cdot : A\times A\rightarrow A$ and 
$[-,-] : A\times A\rightarrow A$ the multiplications defined by 
\begin{center}
\begin{tabular}{lll}
 $e_1\cdot e_1= e_1$,& $e_1\cdot e_2= e_2$& $e_1\cdot e_3=a e_3$,     \\    
$e_2\cdot e_1= e_2$,&    $e_2\cdot e_1=\frac{1}{a} e_2$,& $e_2\cdot e_3=e_3$,\\
$e_3\cdot e_1=a e_3,$&     $ e_3\cdot e_2=0$& $e_3\cdot e_3=0$,\\
$[e_2, e_3]=e_3$ &   $[e_1, e_2]=0$   & $[e_1, e_3]=0$.
\end{tabular}
\end{center}
Then, the quintuple $(P, \cdot,  [-, -], \varepsilon, \alpha)$ is a Hom-Poisson color algebra with 
$$\alpha(e_1)=e_1, \quad \alpha(e_2)=e_2, \quad \alpha(e_3)=ae_3,$$
 and any bicharacter $\varepsilon$.
\end{example}


\begin{theorem}
Let $(P, \ast, [-, -], \varepsilon, \alpha)$ be a Hom-Poisson color algebra and 
 $\sigma : G\times G\rightarrow \mathbb{K}^*$ be a {\it symmetric multiplier} on $G$ i.e.
\begin{enumerate}
 \item [i)] $\sigma(x, y)=\sigma(y, x), \forall x, y\in G$
\item [ii)] $\sigma(x, y)\sigma(z, x+y)$ is invariant under cyclic permutation of $x, y, z\in G$.
\end{enumerate}
Then, $(P, \ast^\sigma, [-, -]^\sigma, \varepsilon, \alpha)$ is also a Hom-Poisson color algebra with
$$x\ast^\sigma y=\sigma(x, y)x\ast y\quad\mbox{and}\quad[x, y]^\sigma=\sigma(x, y)[x, y],$$
for any $x, y\in \mathcal{H}(P)$.
\end{theorem}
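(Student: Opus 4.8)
The plan is to verify, one axiom at a time, that $(P,\ast^\sigma,[-,-]^\sigma,\varepsilon,\alpha)$ satisfies the three defining conditions of a Hom-Poisson color algebra, in each case pulling the scalars $\sigma(\cdot,\cdot)$ out to the front and reducing the required identity to the corresponding axiom already known for the untwisted $(P,\ast,[-,-],\varepsilon,\alpha)$. Since $\sigma$ is $\mathbb{K}^*$-valued and $\alpha$ is left unchanged, the twisted operations $\ast^\sigma$ and $[-,-]^\sigma$ are again even bilinear, so only the algebraic relations have to be checked. As a preliminary step I would record that, combining (i) and (ii), the symmetric multiplier obeys the cocycle identity $\sigma(x,y+z)\sigma(y,z)=\sigma(x,y)\sigma(x+y,z)$: indeed, applying the symmetry (i) to rewrite $\sigma(z,x+y)$ as $\sigma(x+y,z)$ in the first equality of (ii) yields exactly this relation.

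First, for Hom-associativity of $\ast^\sigma$, I would expand $as_{\ast^\sigma}(x,y,z)$ on homogeneous elements. The term $\alpha(x)\ast(y\ast z)$ comes out with coefficient $\sigma(x,y+z)\sigma(y,z)$ and the term $(x\ast y)\ast\alpha(z)$ with coefficient $\sigma(x+y,z)\sigma(x,y)$. By the cocycle identity above these two scalars are equal, so $as_{\ast^\sigma}(x,y,z)$ equals that common scalar times $as_\ast(x,y,z)=0$.

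Next comes the Hom-Lie part. The $\varepsilon$-skew-symmetry of $[-,-]^\sigma$ is immediate from the symmetry $\sigma(x,y)=\sigma(y,x)$ together with the skew-symmetry of $[-,-]$. For the Hom-Jacobi identity I would expand the three cyclic terms $\varepsilon(z,x)[\alpha(x),[y,z]^\sigma]^\sigma$, and so on; they acquire the coefficients $\sigma(x,y+z)\sigma(y,z)$, $\sigma(y,z+x)\sigma(z,x)$ and $\sigma(z,x+y)\sigma(x,y)$, which (after merely reordering the scalar products) are precisely the three cyclic versions of the quantity $\sigma(x,y)\sigma(z,x+y)$ appearing in (ii), hence all equal. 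Factoring out this common scalar leaves exactly the original Hom-Jacobi identity, which vanishes.

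Finally, for the Hom-Leibniz color identity the same device works: the left-hand side and the two right-hand terms pick up $\sigma(x,y+z)\sigma(y,z)$, $\sigma(x+y,z)\sigma(x,y)$ and $\sigma(y,x+z)\sigma(x,z)$ respectively, and after using symmetry (i) to convert $\sigma(x+y,z)$ into $\sigma(z,x+y)$ and to rewrite $\sigma(y,x+z)\sigma(x,z)$ as $\sigma(z,x)\sigma(y,z+x)$, these become once more the three cyclically permuted forms of $\sigma(x,y)\sigma(z,x+y)$, equal by (ii); dividing out the common factor reduces the identity to the Hom-Leibniz identity of the untwisted structure. The only genuine difficulty is bookkeeping: one must read off correctly which pair of $\sigma$-factors is attached to each term and then apply the symmetry (i) to bring every coefficient into the canonical form $\sigma(\cdot,\cdot)\,\sigma(\cdot,\cdot+\cdot)$, so that condition (ii) can be invoked; once the coefficients are seen to coincide, each axiom follows by a single factorization from the corresponding axiom of $(P,\ast,[-,-],\varepsilon,\alpha)$.
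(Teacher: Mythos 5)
Your proposal is correct and follows essentially the same strategy as the paper: pull the scalars $\sigma(\cdot,\cdot)$ out of each term and use the symmetry (i) together with the cyclic invariance (ii) to see that all coefficients coincide, reducing each axiom to the corresponding one for the untwisted structure. The paper only writes out the Hom-Leibniz color identity (its stated convention is to check just the compatibility condition), whereas you also verify Hom-associativity via the derived cocycle identity and the Hom-Jacobi identity, so your argument is, if anything, more complete.
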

\begin{proof}
For any homogeneous elements $x, y, z\in P$,
 \begin{eqnarray}
  \{\alpha(x), y\ast z\}
&=&\{\alpha(x),  \sigma(y, z)y z\}\nonumber\\
&=&\sigma(y, z)\sigma(x, y+z)[\alpha(x), yz]\\
&=&\sigma(y, z)\sigma(x, y+z)[x, y]\alpha(z)+\sigma(y, z)\sigma(x, y+z)\varepsilon(x, y)\alpha(y)\cdot[x, z]\nonumber\\
&=&\sigma(x, y)\sigma(z, x+y)[x, y]\alpha(z)+\sigma(z, x)\sigma(y, z+x)\varepsilon(x, y)\alpha(y)\cdot[x, z]\nonumber\\
&=&\sigma(z, x+y)\{x, y\}\alpha(z)+\sigma(y, x+z)\varepsilon(x, y)\alpha(y)\cdot\{ x, z\}\nonumber\\
&=&\{x, y\}\ast\alpha(z)+\varepsilon(x, y)\alpha(y)\ast\{ x, z\}.\nonumber\\
 \end{eqnarray}
%
This ends the proof.
\end{proof}

The following Theorem is proved as the previous one.
\begin{theorem}
Let $(P, \ast, [-, -], \varepsilon, \alpha)$ be a Hom-Poisson color algebra and 
 $\delta : G\times G\rightarrow \mathbb{K}^*$ be the bicharacter associated with the   {multiplier} $\sigma$ on $G$ 
Then, $(P, \ast^\sigma, [-, -]^\sigma, \varepsilon\delta, \alpha)$ is also a Hom-Poisson color algebra with
$$x\ast^\sigma y=\sigma(x, y)x\ast y,\quad[x, y]^\sigma=\sigma(x, y)[x, y]\quad\mbox{and}\quad \varepsilon\delta(x, y)
=\varepsilon(x, y)\sigma(x, y)\sigma(y, x)^{-1},$$
for any $x, y\in \mathcal{H}(P)$.\\
Moreover, an endomorphism of $(P, \cdot,  [-, -], \varepsilon, \alpha)$ is also an endomorphism of $(P, \ast^\sigma,  [x, y]^\sigma, \varepsilon, \alpha)$.
\end{theorem}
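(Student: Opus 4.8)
The plan is to follow the pattern of the preceding theorem, verifying in turn that $\varepsilon\delta$ is a bicharacter, that $(P,\ast^\sigma,\varepsilon\delta,\alpha)$ is Hom-associative, that $(P,[-,-]^\sigma,\varepsilon\delta,\alpha)$ is Hom-Lie, and that the Hom-Leibniz color identity holds; the endomorphism claim is then treated separately. The single computational engine throughout is the cocycle identity $\sigma(x,y+z)\sigma(y,z)=\sigma(x,y)\sigma(x+y,z)$ defining the multiplier $\sigma$, together with the commutativity of $G$. Throughout I write $\sigma(x,y)$ for $\sigma$ evaluated on the degrees of homogeneous $x,y$, as in the previous proof. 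That $\varepsilon\delta$ is again a bicharacter is essentially the content of the Example introducing $\delta$: there $\delta(x,y)=\sigma(x,y)\sigma(y,x)^{-1}$ is shown to be a bicharacter, and a product of two bicharacters is a bicharacter.

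For Hom-associativity I would expand both $\alpha(x)\ast^\sigma(y\ast^\sigma z)$ and $(x\ast^\sigma y)\ast^\sigma\alpha(z)$; pulling out the scalars produces the factors $\sigma(y,z)\sigma(x,y+z)$ and $\sigma(x,y)\sigma(x+y,z)$ respectively, which agree by a single application of the cocycle identity, after which the original Hom-associativity of $\ast$ finishes the argument. The $\varepsilon\delta$-skew-symmetry of $[-,-]^\sigma$ is immediate: $[x,y]^\sigma=\sigma(x,y)[x,y]=-\sigma(x,y)\varepsilon(x,y)[y,x]$, while $-\varepsilon\delta(x,y)[y,x]^\sigma=-\varepsilon(x,y)\sigma(x,y)\sigma(y,x)^{-1}\sigma(y,x)[y,x]$, and the two coincide. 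The Hom-Leibniz identity is handled exactly as before: expanding $[\alpha(x),y\ast^\sigma z]^\sigma$ and invoking the original Hom-Leibniz identity, the coefficient of $[x,y]\ast\alpha(z)$ matches by the cocycle identity, and the coefficient of $\alpha(y)\ast[x,z]$ matches once one checks $\sigma(x,z)\sigma(y,x+z)=\sigma(y,x)\sigma(x+y,z)$ (again the cocycle identity), the extra factor $\sigma(y,x)^{-1}$ supplied by $\varepsilon\delta$ being precisely what cancels the discrepancy.

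The main obstacle is the Hom-Jacobi identity. My approach is to rewrite each of the three twisted cyclic summands $\varepsilon\delta(z,x)[\alpha(x),[y,z]^\sigma]^\sigma$ and its cyclic images in the form $\Lambda_i$ times the corresponding term $\varepsilon(z,x)[\alpha(x),[y,z]]$ of the \emph{original} Hom-Jacobi identity, where $\Lambda_1=\sigma(z,x)\sigma(x,z)^{-1}\sigma(y,z)\sigma(x,y+z)$ and $\Lambda_2,\Lambda_3$ are its cyclic images under $x\to y\to z\to x$. It then suffices to prove $\Lambda_1=\Lambda_2=\Lambda_3$, for then the whole expression equals $\Lambda_1$ times the original Hom-Jacobi identity, hence vanishes. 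The equality $\Lambda_1=\Lambda_2$ reduces, after clearing the $\delta$-factors, to $\sigma(z,x)\sigma(y,x)\sigma(x+y,z)=\sigma(x,z)\sigma(y,z)\sigma(y+z,x)$, which I would establish by applying the cocycle identity to $\sigma(x+y,z)$ on the left and to $\sigma(y+z,x)$ on the right and simplifying; the remaining equality $\Lambda_2=\Lambda_3$ follows by relabelling. I expect this scalar bookkeeping to be the delicate point, and I would stress that, in contrast with the preceding theorem, neither symmetry of $\sigma$ nor the cyclic-invariance hypothesis is needed here, because replacing $\varepsilon$ by $\varepsilon\delta$ exactly absorbs the failure of $\sigma$ to be symmetric.

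Finally, for the endomorphism statement, let $\phi$ be an endomorphism of $(P,\ast,[-,-],\varepsilon,\alpha)$, so that $\phi$ is even, commutes with $\alpha$, and satisfies $\phi(x\ast y)=\phi(x)\ast\phi(y)$ and $\phi([x,y])=[\phi(x),\phi(y)]$. Since $\phi$ preserves degrees, $\phi(x)$ and $x$ carry the same scalar under $\sigma$, whence $\phi(x\ast^\sigma y)=\sigma(x,y)\phi(x)\ast\phi(y)=\phi(x)\ast^\sigma\phi(y)$ and likewise $\phi([x,y]^\sigma)=[\phi(x),\phi(y)]^\sigma$; as $\phi\circ\alpha=\alpha\circ\phi$ is untouched, $\phi$ is an endomorphism of the twisted structure, which completes the plan.
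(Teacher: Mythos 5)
Your plan is correct, and it is in fact substantially more of a proof than the paper supplies: the paper's entire argument is the sentence ``The following Theorem is proved as the previous one,'' and the previous proof only verifies the Hom-Leibniz identity, doing so under the \emph{different} hypothesis that $\sigma$ is symmetric with $\sigma(x,y)\sigma(z,x+y)$ cyclically invariant. Your key observation --- that here neither symmetry nor cyclic invariance is needed because the replacement of $\varepsilon$ by $\varepsilon\delta$ exactly absorbs the asymmetry of $\sigma$ --- is precisely what makes this theorem \emph{not} literally ``proved as the previous one,'' and your verifications bear this out: the skew-symmetry check, the Leibniz check (where the factor $\sigma(y,x)^{-1}$ from $\varepsilon\delta(x,y)$ cancels against $\sigma(y,x)$ produced by the cocycle identity applied to $\sigma(y,x+z)\sigma(x,z)$), and the Hom-Jacobi check via $\Lambda_1=\Lambda_2=\Lambda_3$ all go through using only the cocycle identity $\sigma(x,y+z)\sigma(y,z)=\sigma(x,y)\sigma(x+y,z)$. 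One small bookkeeping caution: when I clear the $\delta$-factors in $\Lambda_1=\Lambda_2$ I arrive at $\sigma(y,x)\sigma(y,z)\sigma(x,y+z)=\sigma(x,y)\sigma(x,z)\sigma(y,z+x)$ rather than the identity you wrote; both are true consequences of two applications of the cocycle identity, so nothing breaks, but you should recheck that step when writing it out. The endomorphism argument (an even map preserves degrees, hence preserves the scalars $\sigma(x,y)$) is exactly right.
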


\begin{theorem}
 Let $(P', \cdot', [-, -]', \varepsilon, \alpha')$ be a Hom-Poisson color algebra and $P$ a graded vector space with a $\varepsilon$-skew-symmetric 
even bilinear bracket and an even linear map $\alpha$. Let $f : L\rightarrow L'$ be an even bijective linear map such that $f\circ\alpha=\alpha'\circ f$, 
$$f(x\cdot y)=f(x)\cdot' f(y)\quad\mbox{and}\quad f([x, y])=[f(x), f(y)]', \forall x, y\in \mathcal{H}(P).$$
Then $(P, \cdot, [-, -], \varepsilon, \alpha)$ is a Hom-Poisson color algebra.
\end{theorem}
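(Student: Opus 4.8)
The plan is to prove the statement by transport of structure: since $f$ is a bijective even linear map that intertwines $\alpha$ with $\alpha'$ and carries each operation on $P$ to the corresponding operation on $P'$, each of the three defining axioms of a Hom-Poisson color algebra on $P$ can be checked by applying $f$, rewriting the result through the homomorphism relations $f(x\cdot y)=f(x)\cdot' f(y)$, $f([x,y])=[f(x),f(y)]'$ and $f\circ\alpha=\alpha'\circ f$ into the corresponding expression on $P'$, and then using that $P'$ satisfies the axiom together with the injectivity of $f$ to conclude that the expression on $P$ vanishes. Throughout, I would use that $f$ is even, hence degree-preserving, so that $\varepsilon(x,y)=\varepsilon(f(x),f(y))$ for all homogeneous $x,y$; this is what allows the bicharacter coefficients to pass through the computation unchanged.

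For Hom-associativity, first I would compute, for homogeneous $x,y,z\in P$,
\begin{eqnarray}
f\big(as_\cdot(x,y,z)\big)
&=& f\big(\alpha(x)\cdot(y\cdot z)\big)-f\big((x\cdot y)\cdot\alpha(z)\big)\nonumber\\
&=& \alpha'(f(x))\cdot'(f(y)\cdot' f(z)) - (f(x)\cdot' f(y))\cdot'\alpha'(f(z))\nonumber\\
&=& as_{\cdot'}\big(f(x),f(y),f(z)\big),\nonumber
\end{eqnarray}
where the second equality uses the two intertwining relations for $\cdot$ and $\alpha$. Since $(P',\cdot',\varepsilon,\alpha')$ is Hom-associative, the right-hand side is zero, and injectivity of $f$ gives $as_\cdot(x,y,z)=0$, so $(P,\cdot,\varepsilon,\alpha)$ is a Hom-associative color algebra.

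For the Hom-Lie color structure, the $\varepsilon$-skew-symmetry of $[-,-]$ on $P$ is a hypothesis, so only the Hom-Jacobi identity remains. Applying $f$ to the cyclic sum and using $f([x,y])=[f(x),f(y)]'$, $f\circ\alpha=\alpha'\circ f$, and the invariance of the bicharacter under $f$, each summand $\varepsilon(z,x)f([\alpha(x),[y,z]])$ becomes $\varepsilon(f(z),f(x))[\alpha'(f(x)),[f(y),f(z)]']'$, so $f$ maps the Hom-Jacobi expression on $P$ to the Hom-Jacobi expression on $P'$ evaluated at $(f(x),f(y),f(z))$. The latter vanishes since $P'$ is Hom-Lie color, and injectivity of $f$ yields the identity on $P$. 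An identical pullback, now mixing both intertwining relations, handles the Hom-Leibniz color identity: $f$ sends $[\alpha(x),y\cdot z]-[x,y]\cdot\alpha(z)-\varepsilon(x,y)\alpha(y)\cdot[x,z]$ to the corresponding combination on $P'$, which is zero, and injectivity finishes the argument.

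None of the three verifications presents a genuine difficulty; the whole content is the observation that $f$, being a bijective degree-preserving intertwiner, lets every defining identity be pulled back from $P'$. The only points requiring care are purely bookkeeping: keeping the primed and unprimed operations distinct, and noticing that it is the \emph{injectivity} of $f$ (rather than its surjectivity) that is actually used to deduce vanishing on $P$ from vanishing on $P'$.
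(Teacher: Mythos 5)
Your proposal is correct and takes essentially the same approach as the paper: both transport each defining identity through the bijective intertwiner $f$, using that $f$ is even so the bicharacter coefficients are unchanged. The only cosmetic difference is that the paper rewrites $[\alpha(x),y\cdot z]$ forward as $f^{-1}$ of the corresponding expression on $P'$ and simplifies, whereas you apply $f$ to the defect and invoke injectivity; these are the same argument.
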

\begin{proof}
 
 For any $x, y, z\in \mathcal{H}$,
\begin{eqnarray}
 [\alpha(x), y\cdot z]
&=&f^{-1}[f(\alpha(x)), f(y\cdot z)]'\nonumber\\
&=&f^{-1}[f(\alpha(x)), f\Big(f^{-1}\Big(f(y)\cdot' f(z)\Big)\Big)]'\nonumber\\
&=&f^{-1}[\alpha'(f(x)), f(y)\cdot' f(z)]'\nonumber\\
&=&f^{-1}\Big([f(x), f(y)]'\cdot'\alpha'(f(z))+\varepsilon(x, y) \alpha'(f(y))\cdot' [f(x)\cdot' f(z)]'\Big)\nonumber\\
&=&f^{-1}\Big(f\Big(f^{-1}[f(x), f(y)]'\Big)\cdot'\alpha'(f(z))\Big)
+\varepsilon(x, y) f^{-1}\Big(\alpha'(f(y))\cdot' f\Big(f^{-1}[f(x)\cdot' f(z)]'\Big)\Big)\nonumber\\
&=&f^{-1}\Big(f([x, y])\cdot'f(\alpha(z))\Big)
+\varepsilon(x, y) f^{-1}\Big(f(\alpha(y))\cdot' f([x\cdot z])\Big)\nonumber\\
&=&[x, y]\cdot\alpha(z)+\varepsilon(x, y) \alpha(y)\cdot [x\cdot z]\nonumber.
\end{eqnarray}
This gives the conclusion.
\end{proof}

\begin{definition}
Let $(P, \cdot, [-, -], \varepsilon, \alpha)$ be a Hom-Poisson color algebra. An even linear map $\beta : P\rightarrow P$ is said to be 
 \begin{enumerate}
  \item [1)] an element of $\alpha^k$-centroid of $P$ if, (\ref{cent11}),  (\ref{cent12}) and (\ref{cent22})  hold.
 \item [2)] an $\alpha^k$-averaging operator of $P$ if, (\ref{avo11}), (\ref{avo12}) and (\ref{avo22}) hold.
 \item [3)] a Rota-Baxter operator over $P$ if, (\ref{rb11}), (\ref{rb12}) and (\ref{rb22}) hold.
 \item [4)] a Nijenhuis operator over $P$ if, (\ref{nij11}), (\ref{nij12}) and (\ref{nij22}) hold.
 \end{enumerate}
\end{definition}

\begin{example}
The even linear map $R : P\rightarrow P$ defined, on the Hom-Poisson color algebra of Example \ref{exp}, by
$$R(e_1)=-\lambda e_1, \quad R(e_2)=-\lambda e_2, \quad R(e_3)=-\lambda e_3,$$
is a Rota-Baxter operator of weight $\lambda$ on $P$.
\end{example}

\begin{example}
 If $(A, \mu, \varepsilon, \alpha, R)$ is a Rota-Baxter Hom-associative color algebra, then 
$$P(A)=(A, \mu, \{\cdot, \cdot\}=\mu-\varepsilon(\cdot, \cdot)\mu^{op}, \varepsilon, \alpha),$$
 is a Rota-Baxter Hom-Poisson color algebra.
\end{example}

\begin{example}
 If $(A, \mu, \varepsilon, \alpha)$ is a Hom-associative color algebra and $N : A\rightarrow A$ a Nijenhuis operator on $A$, then is also a Nijenhuis
operator on 
$$P(A)=(A, \mu, \{\cdot, \cdot\}=\mu-\varepsilon(\cdot, \cdot)\mu^{op}, \varepsilon, \alpha).$$
\end{example}

\begin{theorem}
 Let $(P, \cdot, [-, -], \varepsilon, \alpha)$ be a Hom-Poisson color algebra and $\beta : P\rightarrow P$ be
 an element of $\alpha^0$-centroid of $P$. Let'us define a new multiplications $\ast: P\times P\rightarrow P$ and $\{-, -\}: P\times P\rightarrow P$ by
\begin{eqnarray}
 x\ast y=x\cdot y\quad\mbox{and}\quad \{x, y\}=[\beta(x), y], \forall x, y\in \mathcal{H}(P).
\end{eqnarray}
Then $(P, \ast, \{-, -\}, \varepsilon, \alpha)$ is also a Hom-Poisson color algebra.\\
Moreover, $\beta$ is a morphism of Hom-Poisson color algebra of $(P, \ast,  \{x, y\}, \varepsilon, \alpha)$
 onto $(P, \cdot,  [-, -], \varepsilon, \alpha)$.
\end{theorem}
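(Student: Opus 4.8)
The plan is to exploit a single reduction that makes almost every verification mechanical: by the centroid identity (\ref{cent22}) the new bracket is nothing but $\beta$ applied to the old one, namely $\{x,y\}=[\beta(x),y]=\beta([x,y])$ for all $x,y\in\mathcal{H}(P)$. Since the associative product is left unchanged, $x\ast y=x\cdot y$, the Hom-associativity of $(P,\ast,\varepsilon,\alpha)$ is immediate from that of $(P,\cdot,\varepsilon,\alpha)$, so I would only have to treat the Hom-Lie axioms for $\{-,-\}$ and the Hom-Leibniz compatibility between $\ast$ and $\{-,-\}$.

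First I would establish that $(P,\{-,-\},\varepsilon,\alpha)$ is a Hom-Lie color algebra. The $\varepsilon$-skew-symmetry is immediate from $\{x,y\}=\beta([x,y])$: applying the linear map $\beta$ to $[x,y]=-\varepsilon(x,y)[y,x]$ gives $\{x,y\}=-\varepsilon(x,y)\{y,x\}$. For the Hom-Jacobi identity I would compute one cyclic term. Using the identity $\{a,b\}=\beta([a,b])$ for both brackets and then the observation recorded after (\ref{cent22}) to slide the remaining $\beta$ across the outer bracket, one finds $\{\alpha(x),\{y,z\}\}=\beta^2\big([\alpha(x),[y,z]]\big)$, and likewise for its two cyclic images. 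Summing with the coefficients $\varepsilon(z,x),\varepsilon(x,y),\varepsilon(y,z)$ and pulling the common factor $\beta^2$ outside reduces the whole expression to $\beta^2$ applied to the Hom-Jacobi identity of the original bracket, hence to $\beta^2(0)=0$.

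The substantive step — and, consistently with the convention announced before the section, the only one I would write out in full — is the Hom-Leibniz compatibility. Starting from $\{\alpha(x),y\ast z\}=\beta\big([\alpha(x),y\cdot z]\big)$, I would expand the inner bracket by the Hom-Leibniz identity of the source algebra, obtaining $\beta\big([x,y]\cdot\alpha(z)\big)+\varepsilon(x,y)\beta\big(\alpha(y)\cdot[x,z]\big)$. Now the two occurrences of $\beta$ are pushed into the associative products by (\ref{cent12}), placing $\beta$ on the first factor of the first term and on the second factor of the second term; recognising $\beta([x,y])=\{x,y\}$ and $\beta([x,z])=\{x,z\}$ then yields exactly $\{x,y\}\ast\alpha(z)+\varepsilon(x,y)\,\alpha(y)\ast\{x,z\}$. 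The only real care here is to route each $\beta$ into the correct slot, which is precisely why (\ref{cent12}) records both equalities $\beta(u\cdot v)=\beta(u)\cdot v=u\cdot\beta(v)$.

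Finally, for the morphism assertion I would check the three defining relations. The commutation $\beta\circ\alpha=\alpha\circ\beta$ is (\ref{cent11}); the bracket compatibility $\beta(\{x,y\})=[\beta(x),\beta(y)]$ follows from two applications of (\ref{cent22}) (equivalently, from $\{x,y\}=\beta([x,y])$ together with $[\beta(x),\beta(y)]=\beta^2([x,y])$); and the behaviour of $\beta$ on the associative products is governed by (\ref{cent12}). I expect the $\beta$-bookkeeping in the Hom-Jacobi reduction and the correct placement of $\beta$ in the Leibniz computation to be the only genuine obstacles, both dispatched by systematic use of (\ref{cent12}) and (\ref{cent22}).
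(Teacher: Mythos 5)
Your verification of the three Hom-Poisson axioms is correct, and there is nothing in the paper to compare it against: the paper's own proof of this theorem is an empty placeholder. The reduction $\{x,y\}=[\beta(x),y]=\beta([x,y])$ does make everything mechanical, as you say: $\varepsilon$-skew-symmetry follows by applying the even map $\beta$ to the old skew-symmetry (degrees, hence all $\varepsilon$-factors, are preserved); the cyclic Jacobi sum collapses to $\beta^{2}$ applied to the original Hom-Jacobi expression after sliding the outer $\beta$ across the bracket via the remark following (\ref{cent22}); and your Hom-Leibniz computation, which expands $\beta([\alpha(x),y\cdot z])$ by the old Leibniz rule and then routes one $\beta$ onto the first factor of $[x,y]\cdot\alpha(z)$ and the other onto the second factor of $\alpha(y)\cdot[x,z]$ using both equalities in (\ref{cent12}), is exactly the right argument.

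The gap is in your last paragraph. A morphism from $(P,\ast,\{-,-\},\varepsilon,\alpha)$ onto $(P,\cdot,[-,-],\varepsilon,\alpha)$ must satisfy $\beta(x\ast y)=\beta(x)\cdot\beta(y)$, and (\ref{cent12}) does not deliver this: it gives $\beta(x\ast y)=\beta(x\cdot y)=\beta(x)\cdot y$, which is $\beta(x)\cdot\beta(y)$ only under additional hypotheses. Concretely, $\beta=c\,\mathrm{id}$ lies in the $\alpha^{0}$-centroid for every scalar $c$, yet $\beta(x\cdot y)=c\,(x\cdot y)$ while $\beta(x)\cdot\beta(y)=c^{2}(x\cdot y)$, so the required identity fails whenever $c\neq 0,1$. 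Hence the multiplicativity of $\beta$ on the untwisted product cannot be ``governed by (\ref{cent12})''; the ``moreover'' clause is false as stated (it appears to be carried over from the theorems in which the associative product is also twisted by $\beta$), and your sketch would break at precisely that step. The bracket half, $\beta(\{x,y\})=\beta([\beta(x),y])=[\beta(x),\beta(y)]$, does hold by two applications of (\ref{cent22}), as you note; only the associative half is irreparable without strengthening the hypotheses on $\beta$.
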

\begin{proof}
 d
\end{proof}

\begin{theorem}
 Let $(P, \cdot, [-, -], \varepsilon, \alpha)$ be Hom-Poisson color algebra and $\beta : P\rightarrow P$ be an  $\alpha^0$-averaging 
operator. Then the two new products
\begin{eqnarray}
 x\ast y=\beta(x)\cdot \beta(y)\quad\mbox{and}\quad \{x, y\}=[\beta(x), \beta(y)], \forall x, y\in \mathcal{H}(P)
\end{eqnarray}
makes $(P, \ast, \{-, -\}, \varepsilon, \alpha)$ a Hom-Poisson color algebra.\\
 \end{theorem}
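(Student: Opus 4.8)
The plan is to verify, one at a time, the three defining axioms of a Hom-Poisson color algebra for the twisted data $(P,\ast,\{-,-\},\varepsilon,\alpha)$, reducing each to the corresponding axiom of the original structure $(P,\cdot,[-,-],\varepsilon,\alpha)$. The only tools needed are the two $\alpha^0$-averaging identities (\ref{avo12}) and (\ref{avo22}), which at $k=0$ read
$$\beta(\beta(x)\cdot y)=\beta(x)\cdot\beta(y)=\beta(x\cdot\beta(y)),\qquad [\beta(x),\beta(y)]=\beta([\beta(x),y])=\beta([x,\beta(y)]),$$
together with the commutation $\alpha\beta=\beta\alpha$ and the $\varepsilon$-skew-symmetry of $[-,-]$ (which gives the right-hand equalities after using skew-symmetry). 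Following the convention announced at the start of the section, I would only write out the Hom-Leibniz compatibility in detail and merely remark that Hom-associativity and the Hom-Jacobi identity go the same way. For the latter two: $\varepsilon$-skew-symmetry of $\{-,-\}$ is inherited from $[-,-]$ since $\beta$ is even; Hom-associativity of $\ast$ follows by expanding $as_\ast(x,y,z)$, pushing each outer $\beta$ through the products with the associative averaging identity, and then invoking Hom-associativity of $(P,\cdot,\varepsilon,\alpha)$ with the substitution $(a,b,c)=(\beta(x),\beta^2(y),\beta(z))$; and the Hom-Jacobi identity for $\{-,-\}$ follows from (\ref{avo22}) by the same ``pull $\beta$ out, apply the original identity, push $\beta$ back in'' scheme.

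The substance of the proof is the compatibility identity
$$\{\alpha(x),\,y\ast z\}=\{x,y\}\ast\alpha(z)+\varepsilon(x,y)\,\alpha(y)\ast\{x,z\}.$$
I would start from the left-hand side $[\beta\alpha(x),\beta(\beta(y)\cdot\beta(z))]$, rewrite $\beta\alpha(x)=\alpha\beta(x)$, and use the Lie averaging identity to pull the outer $\beta$ out of the bracket, obtaining $\beta\big([\alpha\beta(x),\ \beta(y)\cdot\beta(z)]\big)$. Applying the original Hom-Leibniz identity to the inner bracket with $(a,b,c)=(\beta(x),\beta(y),\beta(z))$ then yields
$$\beta\big([\beta(x),\beta(y)]\cdot\alpha\beta(z)\big)+\varepsilon(x,y)\,\beta\big(\alpha\beta(y)\cdot[\beta(x),\beta(z)]\big),$$
at which point I would redistribute the outer $\beta$ across each product using the associative averaging identity, exploiting the fact that both $[\beta(x),\beta(y)]$ and $\alpha\beta(y)=\beta(\alpha(y))$ lie in $\mathrm{Im}\,\beta$, and finally try to recognize the two summands as $\{x,y\}\ast\alpha(z)$ and $\varepsilon(x,y)\,\alpha(y)\ast\{x,z\}$.

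The step I expect to be the main obstacle is precisely this final reconciliation of the twists. The second summand comes out cleanly: writing $\alpha\beta(y)=\beta(\alpha y)$ and applying $\beta(\beta(u)\cdot v)=\beta(u)\cdot\beta(v)$ gives exactly $\alpha\beta(y)\cdot\beta([\beta(x),\beta(z)])=\varepsilon(x,y)^{-1}\big(\varepsilon(x,y)\,\alpha(y)\ast\{x,z\}\big)$, so it matches on the nose. The first summand is delicate: distributing $\beta$ there forces the twist onto the factor $\alpha\beta(z)$, producing $[\beta(x),\beta(y)]\cdot\alpha\beta^2(z)$, whereas the target $\{x,y\}\ast\alpha(z)$ equals $\beta([\beta(x),\beta(y)])\cdot\alpha\beta(z)=[\beta(x),\beta^2(y)]\cdot\alpha\beta(z)$. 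Making these two agree is the crux, and it is where one must apply the averaging relations in exactly the right direction and keep scrupulous track of which argument carries which power of $\beta$; I would devote the bulk of the write-up to this bookkeeping, since the associativity and Jacobi verifications are routine by comparison.
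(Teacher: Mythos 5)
Your proposal follows essentially the same route as the paper's proof: verify only the Hom-Leibniz color identity in detail, reducing it to the original one by commuting $\alpha$ with $\beta$ and shuttling $\beta$ in and out of the products via the two $\alpha^0$-averaging identities, the only (cosmetic) difference being that you wrap the whole bracket in one outer $\beta$ before applying Hom-Leibniz, whereas the paper first simplifies $\beta(\beta(y)\cdot\beta(z))$ to $\beta^2(y)\cdot\beta(z)$ and applies Hom-Leibniz directly. The ``crux'' you flag in the first summand is not actually an obstacle: since $\alpha\beta(z)=\beta(\alpha(z))$, the averaging identity in the form $\beta(a\cdot\beta(b))=\beta(a)\cdot\beta(b)$, applied with $a=[\beta(x),\beta(y)]$ and $b=\alpha(z)$, gives $\beta\bigl([\beta(x),\beta(y)]\cdot\alpha\beta(z)\bigr)=\beta\bigl([\beta(x),\beta(y)]\bigr)\cdot\beta(\alpha(z))=\{x,y\}\ast\alpha(z)$ in one line.
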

\begin{proof}

Let us prove the Hom-Leibniz color identity; for any $x, y, z\in\mathcal{H}(P)$,
 \begin{eqnarray}
  \{\alpha(x), x\ast y\}
&=&[\beta(x), \beta(y\ast z)]\nonumber\\
&=&[\beta\alpha(x), \beta(\beta(y)\beta(z))]\nonumber\\
&=&[\alpha\beta(x), \beta^2(y)\beta(z)]\nonumber\\
&=&[\beta(x), \beta^2(y)]\alpha\beta(z)+\varepsilon(x, y)\alpha\beta^2(y)[\beta(x), \beta(z)]\nonumber\\
&=&\beta[\beta(x), \beta(y)]\beta\alpha(z)+\varepsilon(x, y)\alpha\beta^2(y)[\beta(x), \beta(z)]\nonumber.
 \end{eqnarray}
The even linear map being an $\alpha^0$-averaging operator, it comes
 \begin{eqnarray}
  \{\alpha(x), x\ast y\}
&=&\beta[\beta(x), \beta(y)]\beta\alpha(z)+\varepsilon(x, y)\beta\alpha\beta(y)\beta[x, \beta(z)]\nonumber\\
&=&\beta[\beta(x), \beta(y)]\beta\alpha(z)+\varepsilon(x, y)\beta\Big(\beta\alpha(y)[\beta(x), \beta(z)]\Big)\nonumber\\
&=&\beta[\beta(x), \beta(y)]\beta\alpha(z)+\varepsilon(x, y)\beta\alpha(y)\beta[\beta(x), \beta(z)]\nonumber\\
&=&\{x, y\}\ast\alpha(z)+\varepsilon(x, y)\alpha(y)\ast\{x, z\}\nonumber.
 \end{eqnarray}
This finishes the proof.
\end{proof}

\begin{theorem}
 Let $(P, \cdot, [-, -], \varepsilon)$ be Poisson color algebra and $\beta : P\rightarrow P$ an  $\alpha^0$-averaging 
operator. Then the new products
\begin{eqnarray}
 x\ast y=\beta(x)\cdot y\quad\mbox{and}\quad \{x, y\}=[\beta(x), y], \forall x, y\in \mathcal{H}(P)
\end{eqnarray}
makes $(P, \ast, \{-, -\}, \varepsilon, \beta)$ into a Hom-Poisson color algebra.
 \end{theorem}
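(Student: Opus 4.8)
The plan is to verify the three defining conditions of a Hom-Poisson color algebra for the quintuple $(P,\ast,\{-,-\},\varepsilon,\beta)$, where the twisting map is now $\beta$ itself: that $(P,\ast,\varepsilon,\beta)$ is Hom-associative, that $(P,\{-,-\},\varepsilon,\beta)$ is Hom-Lie color, and that $\ast$ and $\{-,-\}$ obey the Hom-Leibniz color identity. The only tools at hand are the associativity and $\varepsilon$-commutativity of $\cdot$, the $\varepsilon$-skew-symmetry and Jacobi identity of $[-,-]$, the Leibniz rule connecting them in the original (untwisted, so $\alpha=\mathrm{id}$) Poisson color algebra, together with the averaging identities, which here read
\[
\beta(\beta(x)\cdot y)=\beta(x)\cdot\beta(y)=\beta(x\cdot\beta(y)),\qquad \beta([\beta(x),y])=[\beta(x),\beta(y)]=\beta([x,\beta(y)]),
\]
for all $x,y\in\mathcal{H}(P)$. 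Following the convention announced at the start of Section 2, I would make the Hom-Leibniz color identity the central computation and then indicate how the associative and Lie parts follow separately.

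For the Hom-Leibniz identity I would begin with the left-hand side $\{\beta(x),y\ast z\}$, unfold the definitions to reach $[\beta^2(x),\beta(y)\cdot z]$, and apply the Leibniz rule of $(P,\cdot,[-,-])$ to break it into two summands. On the right-hand side I would expand $\{x,y\}\ast\beta(z)+\varepsilon(x,y)\,\beta(y)\ast\{x,z\}$, using the bracket averaging identity to replace $\beta([\beta(x),y])$ by $[\beta(x),\beta(y)]$ and the product averaging identity to absorb the outer $\beta$ coming from $\ast$. The aim is to bring both sides to a common form in which $\beta$ has been distributed onto each argument of every bracket and product.

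For Hom-associativity of $\ast$ I would expand $\ast(\beta(x),\ast(y,z))$ and $\ast(\ast(x,y),\beta(z))$, remove the nested $\beta$ created by the second composition via $\beta(\beta(x)\cdot y)=\beta(x)\cdot\beta(y)$, and then appeal to associativity of $\cdot$. For the Hom-Lie color part I would first establish the $\varepsilon$-skew-symmetry of $\{x,y\}=[\beta(x),y]$ and then the color Hom-Jacobi identity, reducing the latter to the Jacobi identity of $[-,-]$ after pushing $\beta$ through with the bracket averaging identity.

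The step I expect to be the main obstacle is the bookkeeping of the powers of $\beta$. Since the twist is $\beta$ while each new product already carries one factor of $\beta$, composing the two produces a $\beta^2$ in the outer slot on the twisted side, against only a single $\beta$ on the factored side; reconciling these forces one to use both forms of each averaging identity, and this is where the argument is most delicate. Closely related is the $\varepsilon$-skew-symmetry of $\{x,y\}=[\beta(x),y]$: unlike the centroid construction, where the identity $[\beta(x),y]=\beta([x,y])$ makes skew-symmetry immediate, here no single relation delivers it, so this is the point on which I would focus the verification.
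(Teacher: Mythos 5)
The paper states this theorem without any proof (the adjacent results only check the Hom--Leibniz identity for the variants $x\ast y=\beta(x)\cdot\beta(y)$ and $x\ast y=\beta(x)\cdot\alpha^k(y)$ with $\beta$ bijective), so there is nothing to compare your argument against line by line. More importantly, your proposal is a plan rather than a proof: every computation is deferred, and the one point you correctly single out as delicate --- the $\varepsilon$-skew-symmetry of $\{x,y\}=[\beta(x),y]$ --- is not a bookkeeping issue that further care will resolve. The only relation available is the averaging identity $[\beta(x),\beta(y)]=\beta([\beta(x),y])$, which combined with the skew-symmetry of $[-,-]$ yields only
\begin{equation*}
\beta\bigl([\beta(x),y]+\varepsilon(x,y)[\beta(y),x]\bigr)=0,
\end{equation*}
i.e.\ skew-symmetry of the new bracket \emph{after} applying $\beta$. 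Without injectivity of $\beta$ you cannot strip the outer $\beta$, and indeed the claim fails in general: on $\mathfrak g\oplus\mathfrak g$ with componentwise bracket, $\beta(x_1,x_2)=(x_2,x_2)$ is an averaging operator, yet $\{(0,a),(b,0)\}=([a,b],0)$ while $\{(b,0),(0,a)\}=0$. So $[\beta(\cdot),\cdot]$ is in general only a (color) Leibniz-type bracket. This is precisely why the next theorem in the paper assumes $\beta$ bijective, and your proof can only be completed under that (or some comparable) additional hypothesis.

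A second, related weakness: for the Hom--Leibniz identity the two sides do not land on a common form as easily as you suggest. Unfolding gives $\{\beta(x),y\ast z\}=[\beta^2(x),\beta(y)\cdot z]$ on the left, whereas the right-hand side, after using $\beta([\beta(x),y])=[\beta(x),\beta(y)]$, carries terms such as $[\beta(x),\beta(y)]\cdot\beta(z)$ and $\varepsilon(x,y)\beta^{2}(y)\cdot[\beta(x),z]$; matching the distribution of the powers of $\beta$ across the three slots again forces you through the averaging identities applied to arguments that are not of the form $\beta(\,\cdot\,)$, and it is not clear this closes without extra assumptions. You should either add the hypothesis that $\beta$ is injective and carry the computations out explicitly, or weaken the conclusion (e.g.\ to a Hom--Leibniz--Poisson-type structure).
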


\begin{theorem}
 Let $(P, \cdot, [-, -], \varepsilon, \alpha)$ be Hom-Poisson color algebra and $\beta : P\rightarrow P$ be a bijective  $\alpha^k$-averaging 
operator. Then the new products
\begin{eqnarray}
 x\ast y=\beta(x)\cdot \alpha^k(y)\quad\mbox{and}\quad \{x, y\}=[\beta(x), \alpha^k(y)], \forall x, y\in \mathcal{H}(P)
\end{eqnarray}
makes $(P, \ast, \{-, -\}, \varepsilon, \alpha)$ a Hom-Poisson color algebra.\\
Moreover, $\beta$ is a morphism of Hom-Poisson color algebra of $(P, \ast,  \{x, y\}, \varepsilon, \alpha)$
 onto $(P, \cdot,  [-, -], \varepsilon, \alpha)$.
 \end{theorem}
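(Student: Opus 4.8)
The plan is to reduce this statement to the earlier transfer theorem for bijective even linear maps, by exhibiting $\beta$ as a bijective even morphism from the new structure $(P, \ast, \{-,-\}, \varepsilon, \alpha)$ onto the given Hom-Poisson color algebra $(P, \cdot, [-,-], \varepsilon, \alpha)$. This route is cleaner than attacking Hom-associativity directly, because $\alpha$ is not assumed multiplicative, so a direct expansion of $as_\ast$ would not telescope.

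First I would record that $\ast$ and $\{-,-\}$ are even bilinear, since $\alpha$ and $\beta$ are even and the original operations are even, and that the commutation relation $\beta \circ \alpha = \alpha \circ \beta$ is exactly (\ref{avo11}). Next I would verify the two morphism identities for $\beta$. For the associative product, the averaging identity (\ref{avo12}) gives immediately $\beta(x \ast y) = \beta(\beta(x) \cdot \alpha^k(y)) = \beta(x) \cdot \beta(y)$, and for the bracket, the Lie averaging identity (\ref{avo22}) gives $\beta(\{x, y\}) = \beta([\beta(x), \alpha^k(y)]) = [\beta(x), \beta(y)]$. Together with $\beta \alpha = \alpha \beta$, this already establishes the ``moreover'' assertion that $\beta$ is an even morphism onto $(P, \cdot, [-,-], \varepsilon, \alpha)$.

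The step where the bijectivity hypothesis is genuinely needed is the $\varepsilon$-skew-symmetry of the new bracket, which must hold before the transfer theorem applies. Using the computation above twice, the $\varepsilon$-skew-symmetry of $[-,-]$, and the fact that $\beta$ and $\alpha$ preserve degrees (so $\varepsilon(\beta(x), \alpha^k(y)) = \varepsilon(x, y)$), I obtain
\[
\beta(\{x, y\}) = [\beta(x), \beta(y)] = -\varepsilon(x, y)[\beta(y), \beta(x)] = -\varepsilon(x, y)\beta(\{y, x\}).
\]
Since $\beta$ is injective this forces $\{x, y\} = -\varepsilon(x, y)\{y, x\}$. This is precisely the point where bijectivity cannot be dropped: for a non-injective $\beta$ the raw expression $[\beta(x), \alpha^k(y)]$ need not be skew-symmetric, and it is the obstacle I expect to be the only delicate part of the argument.

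With skew-symmetry in hand, all hypotheses of the earlier transfer theorem are satisfied with $f = \beta$, the given Hom-Poisson color algebra in the role of the target, and the new structure in the role of the source. Invoking that theorem then delivers the Hom-associativity of $\ast$, the Hom-Jacobi identity for $\{-,-\}$, and the Hom-Leibniz compatibility all at once, so $(P, \ast, \{-,-\}, \varepsilon, \alpha)$ is a Hom-Poisson color algebra; every remaining verification is a one-line consequence of the averaging identities.
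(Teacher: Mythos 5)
Your reduction is correct, and it is a genuinely different route from the paper's. The paper proves the theorem by direct computation: it applies $\beta$ to $\{\alpha(x), y\ast z\}$, uses the averaging identities (\ref{avo12}) and (\ref{avo22}) to rewrite everything in terms of $\cdot$ and $[-,-]$, invokes the Hom-Leibniz identity of the original algebra, converts back, and (implicitly) cancels the outer $\beta$ by injectivity; Hom-associativity and Hom-Jacobi are then asserted to follow ``in the same way.'' Your argument packages that same conjugation mechanism once and for all: the identities $\beta(x\ast y)=\beta(x)\cdot\beta(y)$ and $\beta(\{x,y\})=[\beta(x),\beta(y)]$ are indeed immediate from (\ref{avo12}) and (\ref{avo22}), the $\varepsilon$-skew-symmetry of the new bracket follows from injectivity exactly as you compute (and must be checked separately, since the transfer theorem assumes it as a hypothesis rather than deriving it), and the earlier theorem on bijective even linear maps then yields all remaining axioms in one stroke. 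What your approach buys is economy and transparency: one appeal to an already-proved result replaces three parallel computations, the role of bijectivity is made explicit (it enters both in the skew-symmetry step and as a hypothesis of the transfer theorem, not only where you flag it), and your morphism identities simultaneously establish the ``moreover'' clause, which the paper's proof never addresses. The one caveat is that the paper's transfer theorem is itself only proved for the Hom-Leibniz identity, with the associativity and Jacobi parts left to the reader, so your reduction inherits that omission from the paper rather than closing it; as a use of the theorem as stated, however, your proof is complete.
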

\begin{proof}
Let us prove the Hom-Leibniz color identity; for any $x, y, z\in\mathcal{H}(P)$,
 \begin{eqnarray}
  \beta(\{\alpha(x), y\ast z\})
&=&[\beta\alpha(x), \alpha^k(y\ast z)]=[\beta\alpha(x), \alpha^k(\beta(y)\alpha^k(z))]\nonumber\\
&=&[\beta\alpha(x), \beta(\beta(y)\alpha^k(z))]=[\alpha\beta(x), \beta(y)\beta(z))]\nonumber\\
&=&[\beta(x), \beta(y)]\alpha\beta(z) +\varepsilon(x, y)\alpha\beta(y)\cdot[\beta(x), \beta(z))]\nonumber\\
&=&\beta[\beta(x), \alpha^k(y)]\alpha\beta(z) +\varepsilon(x, y)\alpha\beta(y)\cdot\beta[\beta(x), \alpha^k(z))]\nonumber\\
&=&\beta\Big(\beta[\beta(x), \alpha^k(y)]\alpha^{k+1}(z) +\varepsilon(x, y)\beta(y)\cdot\alpha^{k+1}[\beta(x), \alpha^k(z))]\Big)\nonumber\\
&=&\beta\Big(\beta\{x, y\}\alpha^{k+1}(z) +\varepsilon(x, y)\beta(y)\cdot\alpha^{k+1}\{x, z\}\Big)\nonumber\\
&=&\beta\Big(\{x, y\}\ast\alpha(z) +\varepsilon(x, y)\alpha(y)\ast\{x, z\}\Big)\nonumber.
 \end{eqnarray}
The associativity and Hom-Jacobi identity are proved in the same way.
\end{proof}

\begin{theorem}
 Let $(P, \cdot,  [-, -], \varepsilon, \alpha)$ be a Hom-Poisson color algebra and $N: P\rightarrow P$ be a Nijenhuis operator.
 Then the new multiplications $\ast_N : P \rightarrow P$ and  $[-, -]_N : P \rightarrow P$ given by 
\begin{eqnarray}
 x\cdot_N y=N(x)\cdot y+ x\cdot N(y)-N(x\cdot y) \quad\mbox{and}\quad   [x, y]_N=[N(x), y]+ [x, N(y)]-N([x, y]),
\end{eqnarray}
makes $P$ into a Hom-Poisson color algebra.\\
Moreover, $N$ is a morphism of Hom-Poisson color algebra of $(P, \cdot_N,  [x, y]_N, \varepsilon, \alpha)$
 onto $(P, \cdot,  [-, -], \varepsilon, \alpha)$.
\end{theorem}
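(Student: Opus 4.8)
The plan is to check the three axioms of a Hom-Poisson color algebra for the quadruple $(P,\cdot_N,[-,-]_N,\varepsilon,\alpha)$ and then the morphism claim. Following the convention stated at the start of this section, the real content is the Hom-Leibniz color identity: the Hom-associativity of $\cdot_N$ and the Hom-Jacobi identity for $[-,-]_N$ are the known Nijenhuis deformations of the underlying Hom-associative and Hom-Lie color algebras and are obtained by the same mechanism, with the original associativity, respectively the original Hom-Jacobi identity, playing the role occupied below by the original Hom-Leibniz identity.

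First I would record the morphism property, which organizes the whole computation. Applying $N$ to $x\cdot_N y=N(x)\cdot y+x\cdot N(y)-N(x\cdot y)$ and reading (\ref{nij12}) from right to left gives $N(x\cdot_N y)=N(x)\cdot N(y)$, and in the same way (\ref{nij22}) gives $N([x,y]_N)=[N(x),N(y)]$; combined with $N\circ\alpha=\alpha\circ N$ from (\ref{nij11}) this is exactly the statement that $N$ is a morphism onto $(P,\cdot,[-,-],\varepsilon,\alpha)$, valid once the source has been shown to be Hom-Poisson. I would also dispose of the cheap points: both new operations are even because $N$, $\alpha$ and the original operations are, and $[-,-]_N$ is $\varepsilon$-skew-symmetric since $[-,-]$ is and $N$ is linear, so $(P,[-,-]_N,\varepsilon,\alpha)$ has the correct symmetry.

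The core step is the Hom-Leibniz color identity $[\alpha(x),y\cdot_N z]_N=[x,y]_N\cdot_N\alpha(z)+\varepsilon(x,y)\,\alpha(y)\cdot_N[x,z]_N$. I would expand the left-hand side by unfolding first the outer bracket $[-,-]_N$ and then $y\cdot_N z$, at once replacing $N\alpha$ by $\alpha N$ and the composite $N(y\cdot_N z)$ by $N(y)\cdot N(z)$ through the morphism property. Most of the resulting summands are original brackets of ordinary products, to which the original Hom-Leibniz identity applies with some arguments shifted by $N$; I would run the identical expansion on the right-hand side. Each side then becomes a sum whose terms all have total $N$-degree two, and I expect them to agree after re-collecting by means of (\ref{nij12}), (\ref{nij22}) and $\alpha\circ N=N\circ\alpha$.

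The main obstacle is the bookkeeping of this matching, and in particular the terms of the form $N(y\cdot z)$ and $N([x,y])$ produced by the $-N(\cdot)$ part of each deformation: these are neither a visible product nor a visible bracket, so the original Hom-Leibniz identity cannot be applied to them directly and they must instead be reabsorbed by using (\ref{nij12}) and (\ref{nij22}) in the opposite direction. Grouping the cancellation according to $N$-degree and pairing each left-hand summand with its right-hand counterpart is what keeps the verification finite, and the same bookkeeping, with associativity or Hom-Jacobi replacing Hom-Leibniz, settles the two remaining axioms.
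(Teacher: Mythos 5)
Your proposal is correct and follows essentially the same route as the paper: a direct expansion of both sides of the Hom-Leibniz color identity for $(\cdot_N,[-,-]_N)$, applying the original Hom-Leibniz identity to the visible products and brackets, and using (\ref{nij12}), (\ref{nij22}) and $N\circ\alpha=\alpha\circ N$ to reabsorb the $N(\cdot)$ terms before comparing. Your preliminary observation that $N(x\cdot_N y)=N(x)\cdot N(y)$ and $N([x,y]_N)=[N(x),N(y)]$ is exactly the simplification the paper uses implicitly when it replaces $N(N(y)z+yN(z)-N(yz))$ by $N(y)N(z)$, and it also yields the morphism claim as you say.
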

\begin{proof}
On the one hand, we have for any $x, y, z\in \mathcal{H}$,
\begin{eqnarray}
 \{\alpha(x), y\cdot_N z\}
&=&\{\alpha(x), N(y)z+yN(z)-N(yz)\}\nonumber\\
&=&[N(\alpha(x)), N(y)z+yN(z)-N(yz)]+[\alpha(x), N(N(y)z+yN(z)-N(yz))]\nonumber\\
&&-N[\alpha(x),  N(y)z+yN(z)-N(yz)]\nonumber\\
&=&[N(\alpha(x)),  N(y)z] +[N(\alpha(x)), yN(z)]-N\Big([N(\alpha(x)), yz]+[\alpha(x), N(yz)]\nonumber\\
&&-N[\alpha(x), yz]\Big)+[\alpha(x), N(y)N(z)]-N[\alpha(x),  N(y)z]-N[\alpha(x), yN(z)]\nonumber\\
&&+N[\alpha(x), N(yz)]\nonumber.
\end{eqnarray}
Using Hom-Leibniz color identity,
\begin{eqnarray}
 \{\alpha(x), y\cdot_N z\}
&=&[N(x),  N(y)]\alpha(z)+\varepsilon(x, y)\alpha(N(y))[N(x), z] +[N(x), y]\alpha(N(z))\nonumber\\
&&+\varepsilon(x, y)\alpha(y)[N(x), N(z)]-N([N(x), y]\alpha(z))-\varepsilon(x, y)N(\alpha(y)[N(x), z])\nonumber\\
&&-N[\alpha(x), N(yz)]-N^2([x, y]\alpha(z))-\varepsilon(x, y) N^2(\alpha(y)[x, z])+[x, N(y)]\alpha(N(z))\nonumber\\
&&+\varepsilon(x, y)\alpha(N(y))[x, N(z)]-N([x,  N(y)]\alpha(z))-\varepsilon(x, y)N(\alpha(N(y))[x,  z])\nonumber\\
&&-N([x, y]\alpha(N(z))-\varepsilon(x, y)N(\alpha(y)[x, N(z)])+N[\alpha(x), N(yz)]\nonumber.
\end{eqnarray}
On the other hand,
\begin{eqnarray}
 &&\qquad\{x, y\}\cdot_N\alpha(z)+\varepsilon(x, y)\alpha(y)\cdot_N\{x, z\}=\nonumber\\
&&=([N(x), y]+[x, N(y)]-N[x, y])\cdot_N \alpha(z)\nonumber\\
&&\qquad+\alpha(x)\cdot_N([N(y), z]+[y, N(z)]-N[y, z])\nonumber\\
&&=N\Big([N(x), y]+[x, N(y)]-N[x, y]\Big)\alpha(z)+\Big([N(x), y]+[x, N(y)]-N[x, y]\Big)\alpha(N(z))\nonumber\\
&&\qquad\qquad-N\Big(([N(x), y]+[x, N(y)]-N[x, y])\alpha(z)\Big)\nonumber\\
&&\qquad+\varepsilon(x, y)N(\alpha(y))\Big([N(x), z]+[x, N(z)]-N[x, z]\Big)\nonumber\\
&&\qquad\qquad+\varepsilon(x, y)\alpha(y)N\Big([N(x), z]+[x, N(z)]-N[x, z]\Big)\nonumber\\
&&\qquad-\varepsilon(x, y)N\Big(\alpha(y)([N(x), z]+[x, N(z)]-N[x, z])\Big)\nonumber.
\end{eqnarray}
By Nijenhuis identity, 
\begin{eqnarray}
 &&\qquad\{x, y\}\cdot_N\alpha(z)+\varepsilon(x, y)\alpha(y)\cdot_N\{x, z\}=\nonumber\\
&&=[N(x), N(y)]\alpha(z)+[N(x), y]\alpha(N(z))\nonumber\\
&&\qquad+[x, N(y)]\alpha(N(z))-N\Big(N[x, y])\alpha(z)+[x, y]N(\alpha(z)-N([x, y]\alpha(z))\Big)\nonumber\\
&&\qquad-N([N(x), y]\alpha(z))-N([x, N(y)]\alpha(z))+N(N[x, y])\alpha(z))\nonumber\\
&&\qquad+\varepsilon(x, y)N(\alpha(y))[N(x), z]+\varepsilon(x, y)N(\alpha(y))[x, N(z)]-\varepsilon(x, y)N(\alpha(y))N[x, z])\nonumber\\
&&\qquad\qquad+\varepsilon(x, y)\alpha(y)[N(x), N(z)]\nonumber\\
&&\qquad-\varepsilon(x, y)N(\alpha(y)[N(x), z])-\varepsilon(x, y)N(\alpha(y)[x, N(z)])+\varepsilon(x, y)N(\alpha(y)N[x, z])\nonumber\\
&&=[N(x), N(y)]\alpha(z)+[N(x), y]\alpha(N(z))\nonumber\\
&&\qquad+[x, N(y)]\alpha(N(z))-N([x, y]N(\alpha(z))+N^2([x, y]\alpha(z))\nonumber\\
&&\qquad-N([N(x), y]\alpha(z))-N([x, N(y)]\alpha(z))\nonumber\\
&&\qquad+\varepsilon(x, y)N(\alpha(y))[N(x), z]+\varepsilon(x, y)N(\alpha(y))[x, N(z)]\nonumber\\
&&\qquad-\varepsilon(x, y)\Big(N(\alpha(y))[x, z]+\alpha(y)N[x, z]-N(\alpha(y)[x, z])\Big)\nonumber\\
&&\qquad\qquad+\varepsilon(x, y)\alpha(y)[N(x), N(z)]\nonumber\\
&&\qquad-\varepsilon(x, y)N(\alpha(y)[N(x), z])-\varepsilon(x, y)N(\alpha(y)[x, N(z)])+\varepsilon(x, y)N(\alpha(y)N[x, z])\nonumber.
\end{eqnarray}
 By comparing, we get the Hom-Leibniz color identity. This proves the Theorem.
\end{proof}

\begin{theorem}
Let $(P, \cdot,  [-, -], \varepsilon, \alpha)$ be a Hom-Poisson color algebra and $R : P\rightarrow P$ be  Rota-Baxter operator of weight
 $\lambda\in{\bf K}$ on $P$. Then $P$ is a Hom-Poisson color algebra with
\begin{eqnarray}
x\ast y = R(x)\cdot y + x\cdot R(y) +\lambda x\cdot y\quad\mbox{and}\quad \{x, y\} = [R(x), y] + [x, R(y)] +\lambda [x, y],
\end{eqnarray}
for all $x, y\in\mathcal{H}(P)$.\\
Moreover, $R$ is a morphism of Hom-Poisson color algebra of $(P, \ast,  \{x, y\}, \varepsilon, \alpha)$
 onto $(P, \cdot,  [-, -], \varepsilon, \alpha)$.
\end{theorem}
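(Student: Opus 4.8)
The plan is to verify the three defining axioms of a Hom-Poisson color algebra for the deformed data $(P,\ast,\{-,-\},\varepsilon,\alpha)$ and then to read off the morphism property. First I would record the three consequences of the Rota-Baxter axioms that drive the whole argument: by (\ref{rb11}) one has $R\circ\alpha=\alpha\circ R$; by (\ref{rb12}) one has $R(y\ast z)=R(y)\cdot R(z)$; and by (\ref{rb22}) one has $R(\{x,y\})=[R(x),R(y)]$, for all $x,y,z\in\mathcal{H}(P)$. These three identities say exactly that $R$ intertwines the deformed operations with the original ones, so once the deformed triple is shown to be a Hom-Poisson color algebra they establish that $R$ is a morphism from $(P,\ast,\{-,-\},\varepsilon,\alpha)$ onto $(P,\cdot,[-,-],\varepsilon,\alpha)$, settling the ``Moreover'' clause.

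For axiom 1), that $(P,\ast,\varepsilon,\alpha)$ is Hom-associative, and for the $\varepsilon$-skew-symmetry and Hom-Jacobi identity of axiom 2) for $(P,\{-,-\},\varepsilon,\alpha)$, I would use the standard weight-$\lambda$ Rota-Baxter deformation applied separately to the Hom-associative color algebra $(P,\cdot,\varepsilon,\alpha)$ and to the Hom-Lie color algebra $(P,[-,-],\varepsilon,\alpha)$. The skew-symmetry of $\{-,-\}$ is inherited from that of $[-,-]$ because $R$ is even and $\varepsilon(R(x),y)=\varepsilon(x,y)$, while the Hom-Jacobi identity is the usual computation that the paper's convention allows one to omit. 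The real content of the theorem is therefore axiom 3), the Hom-Leibniz color identity.

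To prove the Hom-Leibniz identity I would expand the left-hand side $\{\alpha(x),y\ast z\}$ by first writing, with $w=y\ast z$, the expression $\{\alpha(x),w\}=[R\alpha(x),w]+[\alpha(x),R(w)]+\lambda[\alpha(x),w]$ from the definition of $\{-,-\}$. Then I would apply $R\alpha(x)=\alpha R(x)$ and the crucial simplification $R(w)=R(y)\cdot R(z)$ coming from (\ref{rb12}) to clear $R$ from the middle bracket, and expand $w=R(y)\cdot z+y\cdot R(z)+\lambda\,y\cdot z$ in the outer two brackets. Each resulting bracket of a product $[\alpha(u),a\cdot b]$ is then rewritten by the original Hom-Leibniz color identity into the shape $[u,a]\cdot\alpha(b)+\varepsilon(u,a)\,\alpha(a)\cdot[u,b]$, every $\varepsilon$-factor simplifying by degree-preservation of $R$. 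On the right-hand side I would expand $\{x,y\}\ast\alpha(z)+\varepsilon(x,y)\,\alpha(y)\ast\{x,z\}$ directly from the definitions, this time invoking $R(\{x,y\})=[R(x),R(y)]$ and $R(\{x,z\})=[R(x),R(z)]$ from (\ref{rb22}) to resolve the $R$-images of the deformed brackets.

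The one genuine difficulty is the book-keeping: both sides break into fourteen monomials carrying the weights $1$, $\lambda$ and $\lambda^2$, and the task is to pair them off. I expect each monomial on the left to find a unique partner on the right with no residual terms, so the obstacle is organizational rather than conceptual — the Rota-Baxter identities (\ref{rb12}) and (\ref{rb22}) are precisely what is needed to collapse $R$ applied to the deformed products and brackets, which is exactly what makes the term-by-term matching close up.
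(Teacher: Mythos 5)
Your proposal is correct and follows essentially the same route as the paper: expand $\{\alpha(x),y\ast z\}$ from the definitions, use the Rota--Baxter identity to collapse $R(R(y)\cdot z+y\cdot R(z)+\lambda\,y\cdot z)$ into $R(y)\cdot R(z)$, apply the original Hom-Leibniz color identity to each of the seven brackets, and match the resulting fourteen monomials against the expansion of $\{x,y\}\ast\alpha(z)+\varepsilon(x,y)\,\alpha(y)\ast\{x,z\}$, with (\ref{rb22}) identifying $[R(x),R(y)]$ with $R(\{x,y\})$. The paper likewise only writes out the Hom-Leibniz verification and defers the Hom-associativity and Hom-Jacobi checks as ``proved in a similar way,'' so your treatment of those axioms and of the morphism clause matches its level of detail.
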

\begin{proof}
 For any $x, y, z\in \mathcal{H}$,
\begin{eqnarray}
 \{\alpha(x), y\ast z\}
&=&\{\alpha(x), R(y)z+yR(z)+\lambda yz\}\nonumber\\
&=&[R\alpha(x), R(y)z+yR(z)+\lambda yz]+[\alpha(x), R(R(y)z+yR(z)+\lambda yz)]\nonumber\\
&&+\lambda[\alpha(x), R(y)z+yR(z)+\lambda yz]\nonumber\\
&=&[R\alpha(x), R(y)z]+[R\alpha(x), yR(z)]+\lambda[R\alpha(x), yz]+[\alpha(x), R(y)R(z)]\nonumber\\
&&+\lambda[\alpha(x), R(y)z]+\lambda[\alpha(x), yR(z)]+\lambda^2[\alpha(x), yz]\nonumber.
\end{eqnarray}
By Hom-Leibniz color identity,
\begin{eqnarray}
 \{\alpha(x), y\ast z\}
&=&[R(x), R(y)]\alpha(z)+\varepsilon(x, y)\alpha(R(y))[R(x), z]  +[R(x), y]\alpha(R(z))+\varepsilon(x, y)\alpha(y)\cdot[R(x), R(z)]\nonumber\\
&&+\lambda[R(x), y]\alpha(z)+\lambda\varepsilon(x, y)\alpha(y)\cdot[R(x), z]+[x, R(y)]\alpha(R(z))+\varepsilon(x, y)\alpha(R(y))[x, R(z)]\nonumber\\
&&+\lambda[x, R(y)]\alpha(z)+\varepsilon(x, y)\lambda\alpha R(y)[x, z]+\lambda[x, y]\alpha(R(z))
+\lambda\varepsilon(x, y)\alpha(y)[x, R(z)]\nonumber\\
&&+\lambda^2[x, y]\alpha(z)+\lambda^2\varepsilon(x, y)\alpha(y)[x, z]\nonumber.
\end{eqnarray}
By reorganizing the terms, we have
\begin{eqnarray}
 \{\alpha(x), y\ast z\}
&=&[R(x), R(y)]\alpha(z)+[R(x), y]\alpha(R(z))+\lambda[x, y]\alpha(R(z))+[x, R(y)]\alpha(R(z))\nonumber\\
&&+\lambda[R(x), y]\alpha(z)+\lambda[x, R(y)]\alpha(z)+\lambda^2[x, y]\alpha(z)\nonumber\\
&&+\varepsilon(x, y)\alpha(R(y))[R(x), z]+\varepsilon(x, y)\alpha(R(y))[x, R(z)]+\varepsilon(x, y)\lambda\alpha R(y)[x, z]\nonumber\\
&&+\varepsilon(x, y)\alpha(y)[R(x), R(z)]+\lambda\varepsilon(x, y)\alpha(y)[R(x), z]
+\lambda\varepsilon(x, y)\alpha(y)[x, R(z)]\nonumber\\
&&+\lambda^2\varepsilon(x, y)\alpha(y)[x, z]\nonumber\\
&=&R\Big([R(x), y]\alpha(z)+[x, R(y)]+\lambda[x, y]\Big)\alpha(z)\nonumber\\
&&+\Big([R(x), y]+[x, R(y)]+\lambda[x, y]\Big)\alpha(R(z))\nonumber\\
&&+\lambda\Big([R(x), y]+[x, R(y)]+\lambda[x, y]\Big)\alpha(z)\nonumber\\
&&+\varepsilon(x, y)\alpha(R(y))\Big([R(x), z]+[x, R(z)]+\lambda[x, z]\Big)+\varepsilon(x, y)\alpha(y)[R(x), R(z)]\nonumber\\
&&+\lambda\varepsilon(x, y)\alpha(y)\Big([R(x), z]+[x, R(z)]+\lambda[x, z]\Big).\nonumber\\
&=&\Big([R(x), y]+[x, R(y)]+\lambda[x, y]\Big)\ast\alpha(z)\nonumber\\
&&+\varepsilon(x, y)\alpha(y)\ast\Big([R(x), z]+[x, R(z)]+\lambda[x, z]\Big).\nonumber\\
&=&\{x, y\}\ast\alpha(z)+\varepsilon(x, y)\alpha(y)\ast\{x, z\}.\nonumber
\end{eqnarray}
The Hom-associativity and Hom-Jacobi color identity are proved in a similar way.
\end{proof}

The below theorem asserts that the tensor product of any commutative associative algebra and any $n$-Hom-Lie color algebra gives rise to an $n$-Hom-Lie
color algebra.
\begin{theorem}
Let $(P, \cdot, [-, -], \varepsilon, \alpha)$ be a Hom-Poisson color algebra over a field $\mathbb{K}$ and $\hat{\mathbb{K}}$
 an extension of $\mathbb{K}$ (bdsvcyd).  Then, the graded $\mathbb{K}$-vector space 
 $$\hat{\mathbb{K}}\otimes P=\sum_{g\in G}(\mathbb{K}\otimes P)_g=\sum_{g\in G}\mathbb{K}\otimes P_g$$
is an Hom-Poisson color algebra with :\\ 
the associative product
$$ (\xi\otimes x)\cdot'(\eta\otimes y):=\xi\eta\otimes (x\cdot y),$$
the bracket
$$[\xi\otimes x,  \eta\otimes y]'=\xi\eta\otimes [x, y],$$
the even linear map
$$\alpha'(\xi\otimes x):=\xi\otimes \alpha(x)$$
and the bicharacter
$$\varepsilon(\xi+x, \eta+y)=\varepsilon(x, y), \forall \xi, \eta\in\hat{\mathbb{K}} , \forall x, y\in \mathcal{H}(P).$$
\end{theorem}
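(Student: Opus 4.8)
The plan is to reduce every defining identity of a Hom-Poisson color algebra on $\hat{\mathbb{K}}\otimes P$ to the corresponding identity on $P$, exploiting that $\hat{\mathbb{K}}$, being a field extension of $\mathbb{K}$, is in particular a commutative associative $\mathbb{K}$-algebra. First I would fix the grading: since $\hat{\mathbb{K}}\otimes P=\sum_{g\in G}\mathbb{K}\otimes P_g$, a homogeneous element $\xi\otimes x$ has degree $\deg x$, so the scalar $\xi$ contributes nothing to the grading and the prescription $\varepsilon(\xi\otimes x,\eta\otimes y):=\varepsilon(x,y)$ is a well-defined bicharacter. With this convention the maps $\cdot'$, $[-,-]'$ and $\alpha'$ are manifestly even, because the $\hat{\mathbb{K}}$-factors remain in degree $0$ while the $P$-factors behave as in $P$.

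Next I would dispatch the Hom-associative and Hom-Lie parts. Writing $X=\xi\otimes x$, $Y=\eta\otimes y$, $Z=\zeta\otimes z$ with $x,y,z\in\mathcal{H}(P)$, the associator $as_{\cdot'}(X,Y,Z)$ expands, after collecting scalar factors by associativity of $\hat{\mathbb{K}}$, to $\xi\eta\zeta\otimes as_{\cdot}(x,y,z)$, which vanishes by Hom-associativity of $(P,\cdot,\varepsilon,\alpha)$. For the bracket, $\varepsilon$-skew-symmetry follows from $\xi\eta=\eta\xi$ together with the skew-symmetry of $[-,-]$, and each of the three cyclic terms of the Hom-Jacobi identity carries the common factor $\xi\eta\zeta$, so the identity reduces to $(\ref{chli})$ in $P$. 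Thus conditions 1) and 2) of the definition of Hom-Poisson color algebra hold.

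The main computation is the Hom-Leibniz color identity, and this is where commutativity of $\hat{\mathbb{K}}$ does the real work. On the one hand $\{\alpha'(X),Y\cdot'Z\}=\xi\eta\zeta\otimes[\alpha(x),y\cdot z]$, which by the Hom-Leibniz identity in $P$ equals $\xi\eta\zeta\otimes\big([x,y]\cdot\alpha(z)+\varepsilon(x,y)\alpha(y)\cdot[x,z]\big)$. On the other hand the two terms $\{X,Y\}\cdot'\alpha'(Z)$ and $\varepsilon(X,Y)\,\alpha'(Y)\cdot'\{X,Z\}$ produce the scalar strings $(\xi\eta)\zeta$ and $\eta(\xi\zeta)$ respectively; commutativity and associativity of $\hat{\mathbb{K}}$ identify both with $\xi\eta\zeta$, and since $\varepsilon(X,Y)=\varepsilon(x,y)$ ignores the scalars, the right-hand side collapses to exactly the same expression. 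Comparing the two sides yields the identity.

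The only genuine obstacle is the bookkeeping of this scalar reordering: the matching of signs relies on $\varepsilon$ seeing only the $P$-degrees while $\hat{\mathbb{K}}$ is commutative, so that every scalar string rearranges to the common factor $\xi\eta\zeta$ without introducing a stray bicharacter value; were $\hat{\mathbb{K}}$ noncommutative this reduction would break down. Since all three structural identities descend from those of $P$ with a common scalar factored out, $(\hat{\mathbb{K}}\otimes P,\cdot',[-,-]',\varepsilon,\alpha')$ is a Hom-Poisson color algebra.
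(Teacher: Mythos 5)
Your proof is correct, but there is nothing in the paper to compare it against: this particular theorem is stated without any proof (the proof environment is simply absent). Your reduction is the natural one and it works: the grading places every $\xi\otimes x$ in degree $\deg x$, so the bicharacter genuinely ignores the scalars, and each of the five identities (Hom-associativity, $\varepsilon$-skew-symmetry, Hom-Jacobi, Hom-Leibniz) factors as a common scalar string times the corresponding identity in $P$, the strings being identified by commutativity and associativity of $\hat{\mathbb{K}}$. It is worth noting that your argument is essentially the degree-zero specialization of the proof the paper \emph{does} give for the immediately following theorem, where a Hom-Poisson color algebra is tensored with a commutative Hom-associative color algebra $(A,\cdot,\varepsilon,\alpha_A)$: taking $A=\hat{\mathbb{K}}$ concentrated in degree $0$ with $\alpha_A=\mathrm{id}$ recovers the present statement, and all the twisting factors $\varepsilon(x,b)$ appearing in that computation become $1$, which is exactly why your scalar bookkeeping closes without stray bicharacter values. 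The only caveat is cosmetic: the statement itself is garbled in places (the stray ``(bdsvcyd)'', and $\mathbb{K}\otimes P_g$ where $\hat{\mathbb{K}}\otimes P_g$ is meant), but your proof addresses the intended content.
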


\begin{theorem}
 Let $(A, \cdot, \varepsilon, \alpha_A)$  be a commutative Hom-associative
color algebra and $(P, \ast, \{-, -\}, \varepsilon, \alpha_L)$ be a Hom-Poisson color algebra. 
Then the tensor product $A\otimes P$ endowed with the even linear map
 $\alpha=\alpha_A\otimes \alpha_P : A\otimes P\rightarrow A\otimes P$ and the even
 bilinear maps $\ast : (A\otimes P)\times (A\otimes P)\rightarrow A\otimes P$ and
 $\{-, -\} : (A\otimes P)\times (A\otimes P)\rightarrow A\otimes P$ defined, for any $a, b\in \mathcal{H}(A)$, $x, y\in \mathcal{H}(P)$,  by 
\begin{eqnarray}
  \alpha(a\otimes x)&:=&\alpha_A(a)\otimes\alpha_P(x),\nonumber\\
(a\otimes x)(b\otimes y)&:=&\varepsilon(x, b)a\cdot b\otimes x\ast y,\nonumber\\
\{a\otimes x, b\otimes y\}&:=&\varepsilon(x, b)(a\cdot b)\otimes [x, y],\nonumber
\end{eqnarray}
is a Hom-Poisson color algebra.
\end{theorem}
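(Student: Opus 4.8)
The plan is to verify directly the three defining conditions of a Hom-Poisson color algebra for the structure carried by $A\otimes P$: that the product makes it a Hom-associative color algebra, that the bracket makes it a Hom-Lie color algebra, and that the two are linked by the Hom-Leibniz color identity. Throughout I would write $X=a\otimes x$, $Y=b\otimes y$, $Z=c\otimes z$ for homogeneous elements and record the degree of $a\otimes x$ as $|a|+|x|$, so that the ambient bicharacter factorises as $\varepsilon(a\otimes x,\,b\otimes y)=\varepsilon(a,b)\varepsilon(a,y)\varepsilon(x,b)\varepsilon(x,y)$. The repeated use of $\varepsilon(u,v)\varepsilon(v,u)=1$ and of bimultiplicativity is what turns the sign bookkeeping into cancellations.

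First I would establish Hom-associativity of the product. Expanding both $\alpha(X)\cdot(YZ)$ and $(XY)\cdot\alpha(Z)$, the scalar prefactors coming from the bicharacter collapse (on the left $\varepsilon(y,c)\varepsilon(x,b+c)$, on the right $\varepsilon(x,b)\varepsilon(x+y,c)$) to the same value $\varepsilon(x,b)\varepsilon(x,c)\varepsilon(y,c)$. The $A$-tensor slot then reduces to the Hom-associativity of $A$, namely $\alpha_A(a)\cdot(b\cdot c)=(a\cdot b)\cdot\alpha_A(c)$, and the $P$-tensor slot to the Hom-associativity of $\ast$ in $P$; this yields the claim.

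Next I would treat the bracket. The $\varepsilon$-skew-symmetry is checked by rewriting $\{b\otimes y,\,a\otimes x\}$ using the $\varepsilon$-commutativity of $A$ (to turn $b\cdot a$ into $a\cdot b$) and the $\varepsilon$-skew-symmetry of $[-,-]$ in $P$ (to turn $[y,x]$ into $[x,y]$); the accumulated bicharacter factors cancel in pairs, leaving exactly $\varepsilon(x,b)\,(a\cdot b)\otimes[x,y]$, as required. For the Hom-Jacobi identity I would expand the three cyclic terms $\varepsilon(Z,X)[\alpha(X),[Y,Z]]+\varepsilon(X,Y)[\alpha(Y),[Z,X]]+\varepsilon(Y,Z)[\alpha(Z),[X,Y]]$ and factor each into an $A$-slot and a $P$-slot; using the $\varepsilon$-commutativity of $A$ to bring the three $A$-slots to a common form (at the cost of bicharacter factors that merge with the cyclic prefactors), the three $P$-slots assemble, with coefficients $\varepsilon(z,x)$, $\varepsilon(x,y)$, $\varepsilon(y,z)$, into the Hom-Jacobi identity of $P$, and hence vanish.

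The crux is the Hom-Leibniz color identity. Writing out $\{\alpha(X),YZ\}$ and applying the Hom-Leibniz identity of $P$ to $[\alpha_P(x),y\ast z]$ splits the left-hand side into two terms, whose $P$-slots are $[x,y]\ast\alpha_P(z)$ and $\alpha_P(y)\ast[x,z]$. The first matches $\{X,Y\}\cdot\alpha(Z)$ once Hom-associativity of $A$ identifies $\alpha_A(a)\cdot(b\cdot c)$ with $(a\cdot b)\cdot\alpha_A(c)$. The second must match $\varepsilon(X,Y)\,\alpha(Y)\cdot\{X,Z\}$, whose $A$-slot is $\alpha_A(b)\cdot(a\cdot c)$; here the signs differ by a factor $\varepsilon(a,b)$, and the reconciliation rests on the identity, valid in any commutative Hom-associative color algebra,
$$\alpha_A(a)\cdot(b\cdot c)=\varepsilon(a,b)\,\alpha_A(b)\cdot(a\cdot c),$$
which one proves by the chain $\alpha_A(a)\cdot(b\cdot c)=(a\cdot b)\cdot\alpha_A(c)=\varepsilon(a,b)(b\cdot a)\cdot\alpha_A(c)=\varepsilon(a,b)\,\alpha_A(b)\cdot(a\cdot c)$, invoking Hom-associativity, then $\varepsilon$-commutativity, then Hom-associativity again. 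With this the two terms coincide and the identity follows; the final assertion that $A\otimes P$ is Hom-Poisson is then complete. I expect this last step to be the main obstacle: the Hom-Leibniz identity of $P$ produces the correct $P$-slots automatically, but matching the $A$-slots together with the trailing bicharacter factors forces one to commute a factor past $\alpha_A$, which is possible only because $A$ is simultaneously commutative and Hom-associative.
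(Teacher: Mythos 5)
Your proposal is correct and follows essentially the same route as the paper: the paper verifies only the Hom-Leibniz color identity, and its key manipulation is exactly your identity $\alpha_A(a)\cdot(b\cdot c)=(a\cdot b)\cdot\alpha_A(c)=\varepsilon(a,b)(b\cdot a)\cdot\alpha_A(c)=\varepsilon(a,b)\,\alpha_A(b)\cdot(a\cdot c)$, used to reconcile the $A$-slots of the two terms. Your write-up is merely more complete, since it also spells out the Hom-associativity, $\varepsilon$-skew-symmetry and Hom-Jacobi checks that the paper leaves to the reader.
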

\begin{proof}
 For any $x, y, z\in \mathcal{H}$,
\begin{eqnarray}
 \{\alpha(a\otimes x), (b\otimes y)\ast(c\otimes z)\}
&=&\varepsilon(y, c)\{\alpha(a)\otimes\alpha(x), bc\otimes yz\}\nonumber\\
&=&\varepsilon(y, c)\varepsilon(x, b+c)\alpha(a)(bc)\otimes[\alpha(x), yz]\nonumber\\
&=&\varepsilon(y, c)\varepsilon(x, b+c)\alpha(a)(bc)\otimes([x, y]\alpha(z)+\varepsilon(x, y)\alpha(y)[x, z]\nonumber\\
&=&\varepsilon(y, c)\varepsilon(x, b) \varepsilon(x, c)\alpha(a)(bc)\otimes[x, y]\alpha(z)+\nonumber\\
&&\varepsilon(y, c)\varepsilon(x, b+c)\varepsilon(x, y)(ab)\alpha(c)\otimes\alpha(y)[x, z]\nonumber\\
&=&\varepsilon(x, b)\varepsilon(x+y, c)(ab)\alpha(c)\otimes[x, y]\alpha(z)+\nonumber\\
&&\varepsilon(y, c)\varepsilon(x, b+c)\varepsilon(x, y)\varepsilon(a, b)(ba)\alpha(c)\otimes\alpha(y)[x, z]\nonumber\\
&=&\varepsilon(x, b)(ab\otimes[x, y])(\alpha(c)\otimes\alpha(z))+\nonumber\\
&&\varepsilon(y, c)\varepsilon(x, b)\varepsilon(x, y)\varepsilon(a, b)(\alpha(b)\otimes\alpha(y))(ac\otimes[x, z])\nonumber\\
&=&\{a\otimes x, b\otimes y\}\alpha(c\otimes z)+\varepsilon(a+x, b+y)\alpha(b\otimes y), \{a\otimes x, c\otimes z\}.\nonumber
\end{eqnarray}

\end{proof}

\end{document}